\let\pa\partial  
\let\na\nabla  
\let\eps\varepsilon  
\newcommand{\dd}[1]{\operatorname{d}\!#1}
\newcommand{\N}{{\mathbb N}}  
\newcommand{\R}{{\mathbb R}} 
\newcommand{\diver}{\operatorname{div}}  
\newcommand{\F}{{\mathcal F}}
\newtheorem{theorem}{Theorem}   
\newtheorem{lemma}[theorem]{Lemma}   
\newtheorem{proposition}[theorem]{Proposition}   
\newtheorem{remark}[theorem]{Remark}
\begin{document}  

\title[A fractional drift-diffusion-Poisson system]{Large-time asymptotics
of a fractional drift-diffusion-Poisson system via the entropy method} 

\author[F. Achleitner]{Franz Achleitner}
\address{Faculty of Mathematics, University of Vienna, 
Oskar-Morgenstern-Platz 1, 1090 Wien, Austria}
\email{franz.achleitner@univie.ac.at}

\author[A. J\"ungel]{Ansgar J\"ungel}
\address{Institute for Analysis and Scientific Computing, Vienna University of  
	Technology, Wiedner Hauptstra\ss e 8--10, 1040 Wien, Austria}
\email{juengel@tuwien.ac.at} 

\author[M. Yamamoto]{Masakazu Yamamoto}
\address{Graduate School of Science and Technology, Niigata University, Niigata, 
950-2181, Japan}
\email{masakazu@eng.niigata-u.ac.jp}

\date{\today}

\thanks{
The first and second author acknowledge support from the 
Austrian Science Fund (FWF), grant F65. 
The second author is supported by the Austrian Science Fund (FWF), 
grants P27352, P30000, and W1245.
The last author is partially supported by the Japan Society for the Promotion of 
Science (JSPS), Grant-in-aid for Early-Career Scientists (B) 15K17566}

\begin{abstract}
The self-similar asymptotics for solutions to the drift-diffusion equation with
fractional dissipation, coupled to the Poisson equation, is analyzed in the whole space.
It is shown that in the subcritical and supercritical cases, 
the solutions converge to the 
fractional heat kernel with algebraic rate. The proof is based on the entropy
method and leads to a decay rate in the $L^1(\R^d)$ norm. The technique
is applied to other semilinear equations with fractional dissipation.
\end{abstract}

%\paragraph{Keywords:}  
\keywords{Drift-diffusion-Poisson system, fractional dissipation, self-similar 
asymptotics, large-time behavior.}  
 
%\paragraph{AMS classification:}  
\subjclass[2000]{35R11, 35B40, 35K45.}  

\maketitle

%%%%%%%%%%%%%%%%%%%%%%%%%%%%%%%%%%%%%%%%%%%%%%%%%%%%%%%%%%%%%%%%%%%%%%%%%%%%%%%

\section{Introduction}

In this paper, we investigate the large-time behavior of solutions to a drift-diffusion
equation with fractional diffusion, coupled self-consistently to the Poisson
equation. Such models describe the evolution of particles in a fluid under the
influence of an acceleration field. The particle density $\rho(x,t)$ and
potential $\psi(x,t)$ satisfy the equations
\begin{equation}\label{1.eq}
  \pa_t\rho + (-\Delta)^{\theta/2}\rho = \diver(\rho\na\psi), \quad
	-\Delta\psi = \rho\quad\mbox{in }\R^d,\ t>0,
\end{equation}
with initial condition
\begin{equation}\label{1.ic}
  \rho(\cdot,0) = \rho_0\quad\mbox{in }\R^d.
\end{equation}
The fractional Laplacian $(-\Delta)^{\theta/2}$ is defined by
$(-\Delta)^{\theta/2}\rho = \F^{-1}[|\xi|^\theta\F[\rho]]$, where $\F$ is the
Fourier transform, $\F^{-1}$ its inverse, and $\theta>0$. When $\theta=2$,
we recover the standard drift-diffusion-Poisson system arising in semiconductor
theory and plasma physics \cite{Jue09}. 
Drift-diffusion-type equations with $\theta<2$ were
proposed to describe chemotaxis of biological cells whose behavior is not governed
by Brownian motion \cite{Esc06}.
For given acceleration field $\na\psi$ and $1<\theta<2$, 
the first equation in \eqref{1.eq} was derived from the Boltzmann equation
by Aceves-Sanchez and Mellet \cite{AcMe17}, based on the moment mehod of
Mellet \cite{Mel10}. For the convenience of the reader,
we present a formal derivation of the coupled system in Appendix \ref{sec.deriv}.
Note, however, that we consider equations \eqref{1.eq} in the range $0<\theta\le 2$.

The aim of this paper is to compute the decay rate of the solution
to \eqref{1.eq} to self-similarity in the $L^1(\R^d)$ norm using the entropy
method. In previous works \cite{BKW01,LRZ10,YaSu16}, the self-similar asymptotics
of various model variations were shown in $L^p(\R^d)$ norms but the decay rate
is zero when $p=1$. If additionally the first moment exists, i.e.\ if
$|x|\rho_0\in L^1(\R^d)$, the decay rate of the self-similar asymptotics in the 
$L^1(\R^d)$ norm is $1/\theta$, which is optimal \cite[Lemma 5.1]{OgYa09}. 
The entropy method provides an alternative way to analyze the self-similar asymptotics
in the $L^1(\R^d)$ norm. Moreover, its strength is its robustness, i.e., the method can
be easily applied to other semilinear equations with fractional dissipation. We give
two examples in Section \ref{sec.other}.

Before stating our main result and the key ideas of the technique, we review
the state of the art for drift-diffusion equations. The {\em global existence of
solutions} to the drift-diffusion-Poisson system was shown 
for $\theta=2$ in \cite{KuOg08}, 
for the subcritical case $1<\theta\le 2$ in \cite{LRZ10,OgYa09}, 
and for the supercritical case $0<\theta<1$ in \cite{LRZ10,SYK15}.
For suitable initial data, the solution to \eqref{1.eq} satisfies
\begin{align}
  & \rho\in C^0([0,\infty);L^1(\R^d)\cap L^\infty(\R^d)), \quad 
	\rho(t)\ge 0\quad\mbox{in }\R^d,\ t>0, \label{1.p1} \\
	& \|\rho(t)\|_{L^1(\R^d)} = \|\rho_0\|_{L^1(\R^d)}, \quad
	\|\rho(t)\|_{L^p(\R^d)} \le C(1+t)^{-\frac{d}{\theta}(1-\frac{1}{p})}, \quad
	1\le p\le\infty; \label{1.p2}
\end{align}
see \cite[Theorem~1.1]{OgYa09} for $1<\theta\leq 2$, \cite[Theorem 1]{YKS14} for
$\theta=1$ and $d\ge 3$, and \cite[Theorem 1.7]{LRZ10} for $0<\theta<1$ and $d=2$.
The existence result of \cite{OgYa09} for the bipolar drift-diffusion system
was extended by Granero-Belinch\'on \cite{Gra16} 
by allowing for different fractional exponents.
The existence of solutions in Besov spaces was proved for $1<\theta<2d$ and 
$d\geq 2$ in~\cite{ZhLi14}, and for $0<\theta\le 1$ and $d\geq 3$ in~\cite{SYK15}.

The self-similar asymptotics of the {\em fractional heat equation} was studied by
Vazquez \cite[Theorem 3.2]{Vaz17}, 
showing that if $0<\theta<2$ and the initial datum satisfies
$(1+|x|)\rho_0\in L^1(\R^d)$, we have 
$\|\rho(t)-MG_\theta(t)\|_{L^1(\R^d)}\le Ct^{-1/\theta}$ with optimal rate,
where $M=\int_{\R^d}\rho_0\dd{x}$ is the initial mass and
$G_\theta$ is the fundamental solution to the fractional heat equation
(see Section \ref{sec.frac}).
Exploiting the self-similar structure, this proves the exponential decay 
with rate $1/\theta$ to the fractional Fokker-Planck equation with quadratic potential. 
The exponential decay in $L^1$ spaces with weight $1+|x|^k$ and $k<\theta$
was proved by Tristani \cite{Tri15}.
The fractional Laplacian can be replaced by more general L\'evy operators,
and the large-time asymptotics of so-called L\'evy-Fokker-Planck equations
were investigated by Biler and Karch \cite{BiKa03} as well as Gentil and
Imbert \cite{GeIm08}. 

The large-time behavior of solutions to {\em drift-diffusion-Poisson systems} with
$d=2$ and $\theta=2$ was studied by Nagai \cite{Nag11}, showing the decay of the solutions
to zero. A similar result for $0<\theta<2$ was proven by Li et al.\ \cite{LRZ10}.
The self-similar asymptotics in $L^p(\R^d)$ with $1\le p\le\infty$
was shown in \cite{KoKa08} for $\theta=2$ and in \cite{OgYa09} for $1<\theta<2$. 
In the latter reference, also the decay of the first-order asymptotic expansion 
of the solutions was computed. Higher-order expansions were studied for $1<\theta\le 2$
in \cite{Yam12}, for $0<\theta\le 1$ in \cite{YaSu16}, and
for the critical case $\theta=1$ in \cite{YaSu16a}.
However, in most of these references, the decay rate for $p=1$ is zero.

The exponential decay in the relative entropy for solutions to L\'evy-Fokker-Planck
equations was proved in \cite{BiKa03, GeIm08}. Via the Csisz\'ar-Kullback inequality 
(see, e.g., \cite{AMTU01}),
this implies decay in the $L^1(\R^d)$ norm. In fact, we are using the techniques
of \cite{GeIm08}, combined with tools from harmonic analysis and semigroup theory, to
achieve self-similar decay of solutions to~\eqref{1.eq}. 

Our main result is as follows.

\begin{theorem}\label{thm.main}
Let $d\ge 2$ and $0<\theta\le 2$ with $\theta<d/2$.
Let $\rho_0\in L^1(\R^d)\cap L^\infty(\R^d)$ be nonnegative such that 
$|x|^d\rho_0\in L^q(\R^d)$ for some $q > d/\theta$. Furthermore,
let $\rho$ be a solution to \eqref{1.eq} satisfying \eqref{1.p1}-\eqref{1.p2}.
Then, for all $t>0$,
\begin{equation}\label{1.decay}
  \|\rho(t)-MG_\theta(t)\|_{L^1(\R^d)} \le C(1+\theta t)^{-1/2},
\end{equation}
where $C>0$ depends on $\rho_0$ and $\theta$, $M=\int_{\R^d}\rho_0\dd{x}$,
and $G_\theta(x,t)=\F^{-1}[e^{-|\xi|^\theta t}](x)$ is the fundamental solution
to $\pa_t u + (-\Delta)^{\theta/2}u=0$ in $\R^d$. 
\end{theorem}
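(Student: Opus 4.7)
The plan is to adapt the fractional entropy method of \cite{GeIm08} by working in self-similar variables, where $MG_\theta(t)$ becomes a stationary profile and the Poisson coupling enters as a perturbation with a decaying prefactor. Setting $R(t):=(1+\theta t)^{1/\theta}$, $\tau:=\theta^{-1}\log(1+\theta t)$, $u(y,\tau):=R(t)^{d}\rho(R(t)y,t)$, and $\Psi(y,\tau):=R(t)^{2-d}\psi(R(t)y,t)$, the scaling $(-\Delta)^{\theta/2}\mapsto R^{-\theta}(-\Delta_y)^{\theta/2}$ yields
\[
  \pa_\tau u + (-\Delta_y)^{\theta/2}u - \operatorname{div}_y(yu) = R(t)^{\theta-d}\operatorname{div}_y(u\na_y\Psi),\qquad -\Delta_y\Psi=u,
\]
whose linear part admits the stable density $u_\infty := MG_\theta(\cdot,1)$ as stationary state. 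The scaling is $L^1$-isometric, so $\|\rho(t)-MG_\theta(t)\|_{L^1}=\|u(\cdot,\tau)-u_\infty\|_{L^1}$, and the prefactor $R(t)^{\theta-d}=e^{(\theta-d)\tau}$ is exponentially decreasing because of the hypothesis $\theta<d/2<d$.

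Introducing the Boltzmann relative entropy $H(\tau):=\int_{\R^d} u\log(u/u_\infty)\dd{y}$, differentiation along the rescaled flow gives
\[
  H'(\tau)\,=\,-I_\theta(u\,|\,u_\infty)\,+\,\mathcal{R}(\tau),
\]
where $I_\theta$ is the fractional Fisher-type information attached to the linear part and $\mathcal{R}(\tau)=-e^{(\theta-d)\tau}\int \na_y\Psi\cdot\na_y\log(u/u_\infty)\,u\dd{y}$ collects the Poisson contribution. For the linear part I would invoke the Gentil--Imbert entropy--entropy-production inequality for the stable law $G_\theta(\cdot,1)$, giving $I_\theta(u\,|\,u_\infty)\ge\theta\, H(\tau)$. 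To handle the nonlinear problem, I would estimate $|\mathcal{R}(\tau)|$ by Cauchy--Schwarz against $I_\theta^{1/2}$, controlling $\na_y\Psi$---a Riesz potential of the form $c_d\int(y-z)|y-z|^{-d}u(z)\dd{z}$---via Hardy--Littlewood--Sobolev together with the uniform-in-$\tau$ bounds $\|u(\cdot,\tau)\|_{L^p}\le C$, $1\le p\le\infty$, that come from \eqref{1.p2}. The restriction $\theta<d/2$ is exactly what makes Hardy--Littlewood--Sobolev applicable in the relevant range, and the moment hypothesis $|x|^d\rho_0\in L^q$ with $q>d/\theta$ propagates to a weighted estimate on $u(\cdot,\tau)$ supplying the additional $\tau$-decay required to absorb $\mathcal{R}$. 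The outcome is a differential inequality $H'(\tau)\le-\tfrac{\theta}{2}H(\tau)+Ce^{-\delta\tau}$ for some $\delta>0$, and Gr\"onwall yields $H(\tau)\le Ce^{-\theta\tau}$.

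Finally, the Csisz\'ar--Kullback inequality $\|u(\cdot,\tau)-u_\infty\|_{L^1}^2\le 2M\, H(\tau)$ converts the entropy decay into an $L^1$ bound on $u-u_\infty$; substituting back $\tau=\theta^{-1}\log(1+\theta t)$ produces the rate $(1+\theta t)^{-1/2}$ asserted in \eqref{1.decay}. The main obstacle is the closure of this differential inequality: the Gentil--Imbert estimate is by now classical, but the Poisson drift $\na_y\Psi$ is genuinely nonlocal and long-range, so bounding $\mathcal{R}(\tau)$ uniformly in $\tau$ forces a delicate interplay between the fractional Fisher information, the Hardy--Littlewood--Sobolev inequality, the hypercontractive bounds \eqref{1.p2}, and the weighted moment propagation---it is precisely in this step that the sharp thresholds $\theta<d/2$ and $q>d/\theta$ of the theorem are dictated.
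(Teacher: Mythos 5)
Your overall strategy (self-similar rescaling, the Gentil--Imbert $\Phi$-entropy machinery for the stable law, a Gronwall argument treating the Poisson drift as an exponentially damped perturbation, then Csisz\'ar--Kullback) is indeed the route the paper takes, but two steps of your plan have genuine gaps. First, the rescaling is \emph{not} an exact conjugation of $MG_\theta(t)$ to the stationary profile: the stationary state of the rescaled Fokker--Planck operator is $G_\theta(\cdot,1/\theta)$ (up to mass), and undoing the scaling sends it to $MG_\theta(\cdot,t+1/\theta)$, not to $MG_\theta(\cdot,t)$. So the claimed identity $\|\rho(t)-MG_\theta(t)\|_{L^1}=\|u(\tau)-u_\infty\|_{L^1}$ is false, and one still has to estimate $\|G_\theta(t+1/\theta)-G_\theta(t)\|_{L^1}$; in the paper this is a separate lemma (itself proved by an entropy argument) and it contributes an error of exactly the same order $(1+\theta t)^{-1/2}$, so it cannot be dropped.

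Second, and more seriously, your absorption of the Poisson term $\mathcal{R}(\tau)$ by Cauchy--Schwarz against $I_\theta^{1/2}$ does not close in the fractional setting. For $\theta<2$ the entropy production of the linear part is the nonlocal double integral of Bregman distances $\int\!\!\int D_\Phi(v(x),v(x+y))\,\nu(\dd{y})\,u_\infty\dd{x}$, not the local Fisher information $\int u\,|\na\log(u/u_\infty)|^2\dd{y}$, and the drift term $\int u\,\na\Psi\cdot\na\log(u/u_\infty)\dd{y}$ is not dominated by (a power of) that nonlocal quantity by any standard inequality; this is precisely the point where the classical $\theta=2$ argument breaks. The paper sidesteps this by choosing $\Phi(s)=s^p$ with $p$ close to $1$, integrating by parts so that only $v^p$ (no derivative of $v$) appears against $\diver(u_\infty\na\phi)$, and then using the uniform-in-time bounds $\|\na\log G_\theta(1/\theta)\|_{L^\infty}$ (a special gradient estimate for stable kernels), $\|\na\phi\|_{L^\infty}$ and $\|u\|_{L^\infty}$; this yields $|\mathcal{R}|\le\sigma(t)(E_p+1)$ with $\sigma(t)=Ce^{-(d-\theta)t}$ and no Fisher information at all. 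Relatedly, you misattribute the hypotheses: $\theta<d/2$ is not needed for Hardy--Littlewood--Sobolev (the bound on $\na\phi$ only uses $u\in L^1\cap L^\infty$); it enters in the Gronwall step (integrability of $e^{\theta s}\sigma(s)=Ce^{(2\theta-d)s}$) and in making the weighted estimate applicable for $1<\theta\le 2$. The moment hypothesis $|x|^d\rho_0\in L^q$, $q>d/\theta$, is not used to generate extra decay to absorb $\mathcal{R}$; it is needed to make the relative entropy finite in the first place (the heavy tails of $G_\theta$ make this nontrivial), and its propagation in time is a substantial separate argument (Appendix A of the paper) that your sketch assumes without proof.
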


Let us comment on the theorem. 
We need the finiteness of the moment $|x|^d\rho_0$ in $L^{q}(\R^d)$
to guarantee the well-posedness of the entropy functional; 
see Lemma~\ref{lem.estrho} and Step 4 in the proof of Theorem~\ref{thm.main}.
The lower bound of $q$ is rather natural
since $|x|^d G_\theta\in L^q(\R^d)$ if and only if $q>d/\theta$.
If $\rho_0$ grows like $(1+r)^{-\alpha}$ for large values of the radius $r=|x|$,
the condition $|x|^d\rho_0\in L^q(\R^d)$ for large values of $q$ is only slightly 
stronger than $\rho_0\in L^1(\R^d)$. 
Indeed, in the latter case, we need $\alpha>d$, while $\alpha>(1+1/q)d$ is required in 
the former case.
The condition $\theta<d/2$ is needed to estimate the nonlinear drift term;
see the proof of Lemma \ref{lem.decayu}.
The decay rate is not optimal. This may be due to the fact the first moment of
the fundamental solution $G_\theta$ is finite for all $1<\theta<2$ but not for
$0<\theta<1$. The derivation of \eqref{1.eq} leads to a drift term 
$\diver(\rho D\na\psi)$ involving the drift matrix $D\in\R^{d\times d}$. 
We explain in Appendix \ref{sec.drift} that we are able to treat 
only the case when $D$ equals the identity matrix (times a factor and up to adding
a skew-symmetric matrix).

As already mentioned, 
the idea of the proof is to employ the entropy method, originally developed for
stochastic processes by Bakry and Emery \cite{BaEm85} and later extended to
linear and nonlinear diffusion equations (see, e.g., \cite{AMTU01,CJMTU01}). 
First, we reformulate \eqref{1.eq} in terms of the rescaled function
$$
  u(x,t) = \frac{e^{dt}}{M}\rho\bigg(e^t x,\frac{e^{\theta t}-1}{\theta}\bigg),
	\quad x\in\R^d,\ t>0.
$$
This function solves a drift-diffusion-Poisson system with the confinement potential
$V(x)=\frac12|x|^2$ and with the nonnegative steady state $u_\infty=G_\theta(1/\theta)$.
Next, we show that the relative entropy
$$
  E_p\bigg[\frac{u}{u_\infty}\bigg] 
	= \int_{\R^d}\bigg(\frac{u}{u_\infty}\bigg)^p u_\infty \dd{x}
	- \bigg(\int_{\R^d}u \dd{x}\bigg)^p
$$
satisfies the inequality
$$
  \frac{\dd{E_p}}{\dd{t}} \le -(\theta+\sigma(t))E_p + \sigma(t)
$$
for some function $\sigma(t)$ which comes from the drift term involving $\psi$
and which decays to zero exponentially fast. 
For this result, we need some results for L\'evy operators due to \cite{GeIm08}
and a modified logarithmic Sobolev inequality due to Wu~\cite{Wu00} and 
Chafa\"i \cite{Cha04}.
By Gronwall's lemma, we conclude the exponential convergence of 
$t\mapsto E_p[u(t)/u_\infty]$. Then the Csisz\'ar-Kullback inequality implies that
$u(t)-u_\infty$ converges exponentially fast in the $L^1(\R^d)$ norm. 
Finally, scaling back to the original variable, we deduce the algebraic decay for
$\rho(t)-MG_\theta(t)$ in the $L^1(\R^d)$ norm.

The strength of the entropy method is that it is quite robust. It can be applied
to other equations with fractional dissipation, at least if the regularity
and decay properties \eqref{1.p1}-\eqref{1.p2} hold. As examples, we consider
the two-dimensional quasi-geostrophic equation and a generalized fractional
Burgers equation in one space dimension; see Section \ref{sec.other}.

The paper is organized as follows. We summarize some results on the fractional
heat equation and L\'evy operators in Section \ref{sec.pre}. The proof
of Theorem \ref{thm.main} is given in Section \ref{sec.proof}. In Section 
\ref{sec.other}, the entropy method is applied to other equations. 
In Appendix \ref{sec.estrho}, a weighted $L^q(\R^d)$ estimate for $\rho$ is shown.
Appendix \ref{sec.deriv} is concerned with the formal derivation of \eqref{1.eq},
summarizing the ideas of \cite{AcMe17}. Finally, we explain in Appendix
\ref{sec.drift} that we can only treat drift matrices of the form $D=aI+B$,
where $a>0$, $I$ is the unit matrix, and $B$ is a skew-symmetric matrix.

%%%%%%%%%%%%%%%%%%%%%%%%%%%%%%%%%%%%%%%%%%%%%%%%%%%%%%%%%%%%%%%%%%%%%%%%%%%%%%%

\section{Preliminaries}\label{sec.pre}

We need two ingredients for our analysis. 
The first one are properties of the fractional heat kernel,
 the second one is a modified logarithmic Sobolev inequality
 which relates the entropy $E_p$ and the entropy production $-\dd{E_p}/\dd{t}$. 
For the convenience of the reader, we collect the needed results.

\subsection{Fractional heat equation}\label{sec.frac}

We consider the fractional heat equation
\begin{equation}\label{a.fhe}
  \pa_t u + (-\Delta)^{\theta/2}u = 0\quad\mbox{in }\R^d, \quad t>0,
\end{equation}
for $0<\theta\leq 2$.
The operator $(-\Delta)^{\theta/2}$ is defined
for $\theta>0$ and functions $u$ in the Schwartz space of rapidly decaying 
functions on $\R^d$ 
via Fourier transformation by $(-\Delta)^{\theta/2}u=\F^{-1}[|\xi|^\theta\F[u]]$,
where $\F[u](x)=(2\pi)^{-d/2}\int_{\R^d}u(\xi)e^{-ix\cdot\xi}\dd{\xi}$ is the Fourier 
transform of $u$, and $\F^{-1}$ is its inverse. 
In the limit $\theta\to 2$,
the Laplace operator $-\Delta$ is recovered, but for $\theta\neq 2$, 
$(-\Delta)^{\theta/2}$
is a nonlocal operator. The fractional Laplacian can be expressed as the
singular integral operator \cite{DrIm06}
$$
  (-\Delta)^{\theta/2}u = -c_{d,\theta}\,\mathrm{p.\,v.}\int_{\R^d}
 	\frac{u(x+y)-u(x)}{|y|^{d+\theta}}\dd{y}, \quad
	c_{d,\theta} = \frac{\theta}{2\pi^{d/2+\theta}}
	\frac{\Gamma(\frac12(d+\theta))}{\Gamma(\frac12(2-\theta))},
$$
where $u$ is a suitable function, p.v.\  denotes the Cauchy principal value,
and $\Gamma$ is the Gamma function. The principal value can be avoided for
$0<\theta<2$, and the integral becomes a standard one when $0<\theta<1$
\cite[Theorem~1]{DrIm06}.

The fundamental solution $G_\theta$ of the fractional heat equation~\eqref{a.fhe} 
is given by
\begin{equation}\label{a.g}
  G_\theta(x,t) = \F^{-1}[e^{-t|\xi|^\theta}](x), \quad x\in\R^d,\ t>0.
\end{equation}
We recall some qualitative properties of $G_\theta$:
\begin{itemize}
\item Normalization: $\int_{\R^d}G_\theta(x,t)\dd{x}=1$ for $t>0$.
\item Self-similar form: A direct computation shows that
\begin{equation}\label{a.self}
  \lambda^d G_\theta(\lambda x,\lambda^\theta t) = G_\theta(x,t)
	\quad\mbox{for all }x\in\R^d,\ t>0,\ \lambda>0.
\end{equation}
\item Pointwise estimates \cite[Theorem 7.3.1]{Kol11}: For any $K>0$, there
exists a constant $C>1$, which can be chosen uniformly for $\theta$ from any 
compact interval in $(0,2)$, such that
\begin{equation}\label{a.asymp}
\begin{aligned}
  \frac{1}{Ct^{d/\theta}} \le G_\theta(x,t) \le \frac{C}{t^{d/\theta}}
	&\quad\mbox{if }|x|\le Kt^{1/\theta},\ t>0, \\
	\frac{t}{C|x|^{d+\theta}} \le G_\theta(x,t) \le \frac{Ct}{|x|^{d+\theta}}
	&\quad\mbox{if }|x|\ge Kt^{1/\theta},\ t>0.
\end{aligned}
\end{equation}
\item Gradient estimate \cite[Theorem 7.3.2]{Kol11}: There exists $C>0$ such that
\begin{equation}\label{a.grad}
  |\na G_\theta(x,t)| \le C\min\big\{t^{-1/\theta},|x|^{-1}\big\}G_\theta(x,t),
	\quad x\in\R^d,\ t>0,
\end{equation}
holds uniformly for $\theta$ from any compact interval in $(0,2)$.
\end{itemize}

Here and in the following, $C>0$ denotes a generic constant independent of $x$ and $t$.

%Let $u(t)=G_\theta(t)*u_0$ be the solution to the fractional heat equation \eqref{a.fhe}
%with nonnegative initial datum $u_0$ satisfing $(1+|x|)u_0\in L^1(\R^d)$. 
%Let $1\le p\le\infty$ and set $\widetilde M=\|u_0\|_{L^1(\R^d)}$. 
%Then \cite[Lemma 5.1]{OgYa09}
%\begin{equation}\label{a.decay}
%  \|u(t)-\widetilde M G_\theta(t)\|_{L^p(\R^d)} 
%	\le Ct^{-\frac{d}{\theta}(1-\frac{1}{p})-\frac{1}{\theta}}
%	\quad\mbox{for all }t>0,
%\end{equation}
%where $C>0$ depends on $\||x|u_0\|_{L^1(\R^d)}$.

%%%%%%%%%%%%%%%%%%%%

\subsection{L\'evy operators}\label{sec.levy}

We recall some results for L\'evy operators. A L\'evy operator $\mathcal{I}$ is
the infinitesimal generator associated with a L\'evy process. According to
the L\'evy-Khinchine formula \cite{Sat13}, it can be written in the form
$$
  \mathcal{I}[u] = \diver(\sigma\na u) - b\cdot\na u + \int_{\R^d}
	\big(u(x+y)-u(x)-\na u(x)\cdot y h(y)\big)\nu(\dd{y}),
$$
defined for suitable functions $u$, where $\sigma$ is a symmetric positive semidefinite
$d\times d$ matrix, $b\in\R^d$, $h(y)=(1+|y|^2)^{-1}$ is a truncation function, 
and $\nu$ denotes a nonnegative singular measure on $\R^d$ satisfying
\[ 
  \nu(\{0\})=0 \quad \text{and} \quad  
  \lim_{\eps\to 0}\int_{\{|y|>\eps\}}\min\{1,|y|^2\}\nu(\dd{y})<\infty;
\]
see, e.g., \cite[Section 1]{GeIm08}. 
Fractional Laplacians $-(-\Delta)^{\theta/2}$ for $0<\theta<2$ are L\'evy operators 
 with $\sigma=0$, $b=0$, and L\'evy measure $\nu(\dd{y})=\dd{y}/|y|^{d+\theta}$.
For more details on L\'evy operators, we refer to \cite{App04,Sat13}.

Let $\Phi:\R_+\to\R$ be a smooth convex function
 and $u_\infty$ a positive function satisfying $\int_{\R^d}u_\infty \dd{x}=1$.
The $\Phi$-entropy is defined by
\[
  \mathrm{Ent}_{u_\infty}^\Phi(f) = \int_{\R^d}\Phi(f)u_\infty \dd{x}
	- \Phi\bigg(\int_{\R^d}fu_\infty \dd{x}\bigg),
\]
for non-negative functions $f$. Observe that
Jensen's inequality implies that $\mathrm{Ent}_{u_\infty}^\Phi(f)\geq 0$.

We need two results for the $\Phi$-entropy.
In the following we only consider the family of smooth convex functions
 $\Phi_p:\R_+\to\R$, $s\mapsto s^p$, with $p\in(1,2]$.
However, the results hold for any smooth convex function $\Phi$
such that the mappings $(a,b)\mapsto D_\Phi(a+b,b)$ and 
$(a,y)\mapsto\Phi''(a)y\cdot(\sigma y)$
 are convex on $\{a+b\geq 0,\ b\geq 0\}$ and $\R_+\times\R^{2d}$, respectively.
Here, $D_\Phi(a,b):=\Phi(a)-\Phi(b)-\Phi'(b)(a-b)$ denotes the Bregman distance.
 
The first result is a modified logarithmic Sobolev inequality.

\begin{proposition} \label{prp.logsob}
Let $p\in(1,2]$ and consider $\Phi:\R_+\to\R$, $s\mapsto s^p$.
If $u_\infty$ is the density of an infinitely divisible probability measure
 then for all smooth positive functions $f$
 \begin{equation}\label{a.logsob}
  \mathrm{Ent}_{u_\infty}^\Phi(f) 
	\le \int_{\R^d}\Phi''(f)\na f\cdot(\sigma_\infty\na f) u_\infty \dd{x}
	+ \int_{\R^d}\int_{\R^d}D_\Phi(f(x),f(x+y))\nu_\infty(\dd{y})u_\infty \dd{x},
 \end{equation}
 where $\nu_\infty$ and $\sigma_\infty$ are the L\'evy measure
 and the diffusion matrix associated with $u_\infty$, respectively.
\end{proposition}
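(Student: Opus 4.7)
The plan is to prove \eqref{a.logsob} by the L\'evy-semigroup interpolation argument of Wu and Chafa\"i. Since $u_\infty$ is the density of an infinitely divisible probability measure, the L\'evy--Khinchine theorem furnishes a L\'evy process $(X_t)_{t\ge 0}$ whose infinitesimal generator $\mathcal{I}$ has characteristics $(\sigma_\infty,b_\infty,\nu_\infty)$ and such that $X_1$ has density $u_\infty$. I introduce the associated Markov semigroup $P_tg(x)=\mathbb{E}[g(x+X_t)]$; since it is a convolution operator it commutes with $\na$ and with translations $g\mapsto g(\cdot+y)$ for every $y\in\R^d$, and its generator is exactly $\mathcal{I}$. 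Because $\int_{\R^d}g\,u_\infty\dd{x}=\mathbb{E}[g(X_1)]=P_1 g(0)$, the $\Phi$-entropy rewrites as
\[
  \mathrm{Ent}^\Phi_{u_\infty}(f)=P_1(\Phi\circ f)(0)-\Phi\bigl(P_1 f(0)\bigr).
\]

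Next I would telescope along the semigroup:
\[
  P_1(\Phi\circ f)-\Phi(P_1 f)
  =\int_0^1\frac{\dd{}}{\dd{s}}\Bigl[P_s\bigl(\Phi(P_{1-s}f)\bigr)\Bigr]\dd{s}
  =\int_0^1 P_s\bigl[\Gamma^\Phi(P_{1-s}f)\bigr]\dd{s},
\]
where $\Gamma^\Phi(g):=\mathcal{I}[\Phi(g)]-\Phi'(g)\,\mathcal{I}[g]$ is the $\Phi$-\emph{carr\'e du champ}. A direct calculation with the L\'evy--Khinchine representation of $\mathcal{I}$, in which the first-order drift and the truncation correction $\na g(x)\cdot y h(y)$ cancel against the corresponding terms coming from $\Phi'(g)\mathcal{I}[g]$, gives the pointwise formula
\[
  \Gamma^\Phi(g)(x)=\Phi''(g)\,\na g\cdot(\sigma_\infty\na g)+\int_{\R^d}D_\Phi\bigl(g(x+y),g(x)\bigr)\,\nu_\infty(\dd{y}),
\]
which is precisely the integrand on the right-hand side of~\eqref{a.logsob}.

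The decisive step is then to move the inner $P_{1-s}$ outside $\Gamma^\Phi$. Since $P_{1-s}$ is an average against the law of $X_{1-s}$, the two joint-convexity hypotheses on $(a,y)\mapsto\Phi''(a)y\cdot(\sigma y)$ and on $(a,b)\mapsto D_\Phi(a+b,b)$ permit a componentwise application of Jensen's inequality. Using the commutations $\na P_{1-s}f=P_{1-s}(\na f)$ and $P_{1-s}f(\cdot+y)-P_{1-s}f=P_{1-s}[f(\cdot+y)-f]$, and writing the jump integrand with $b=f(x)$, $a=f(x+y)-f(x)$, I would obtain
\[
  \Gamma^\Phi(P_{1-s}f)(x)\le P_{1-s}\bigl[\Gamma^\Phi(f)\bigr](x),\quad x\in\R^d,\ s\in(0,1).
\]
Inserting this pointwise bound into the interpolation identity, using the semigroup property $P_sP_{1-s}=P_1$, evaluating at $x=0$, and recognizing $P_1[\Gamma^\Phi(f)](0)=\int_{\R^d}\Gamma^\Phi(f)\,u_\infty\dd{x}$, produces \eqref{a.logsob} after a trivial integration in $s$.

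The routine portion is the computation of $\Gamma^\Phi$, which is slightly delicate only because of the principal-value truncation; the genuine obstacle is the Jensen step, whose validity is exactly what the two joint-convexity assumptions encode, and which is why the family $\Phi_p$ must be restricted to $p\in(1,2]$. For $\Phi_p$ the first convexity statement reduces to the joint convexity of $(a,y)\mapsto a^{p-2}|y|_\sigma^2$ on $\R_+\times\R^d$, and the second to that of $(a,b)\mapsto(a+b)^p-b^p-pb^{p-1}a$ on $\{a+b\ge 0,\,b\ge 0\}$; both are short Hessian checks that I would carry out at the end. A standard density/regularization argument extends the inequality from smooth, rapidly decaying $f$ to the full class claimed in the statement.
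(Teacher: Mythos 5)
The paper does not actually prove Proposition \ref{prp.logsob}: it is quoted from the literature with the references Wu, Chafa\"i and \cite[Theorem 2]{GeIm08}. What you write is precisely the semigroup-interpolation proof used in those references (represent $u_\infty$ as the law of $X_1$ for a L\'evy process, write the entropy as $P_1(\Phi\circ f)(0)-\Phi(P_1f(0))$, interpolate, compute the $\Phi$-carr\'e du champ in which the drift and the truncation term cancel, and push $P_{1-s}$ through $\Gamma^\Phi$ by Jensen using the commutation of a convolution semigroup with $\na$ and with translations). The deferred convexity verifications are exactly the standing hypotheses the paper states before the proposition, and they do hold for $\Phi_p$, $p\in(1,2]$ (for the diffusion part this is the joint convexity of $(a,y)\mapsto a^{p-2}|y|_\sigma^2$, a perspective-type function), so the overall argument is sound.

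One point you should not gloss over: your carr\'e du champ produces $D_\Phi\bigl(f(x+y),f(x)\bigr)$, i.e.\ the Bregman distance with first argument at the shifted point, which is the form appearing in Wu's Poisson inequality and in the cited general result. The statement \eqref{a.logsob} as printed here has $D_\Phi\bigl(f(x),f(x+y)\bigr)$, and with the paper's convention $D_\Phi(a,b)=\Phi(a)-\Phi(b)-\Phi'(b)(a-b)$ these integrands differ pointwise; even for a symmetric $\nu_\infty$ the two double integrals are not identical, since the change of variables moves the weight from $u_\infty(x)$ to $u_\infty(x+y)$. So your claim that your integrand ``is precisely'' the right-hand side of \eqref{a.logsob} needs a comment: either you prove the proposition in the (correct, literature) form you derived and note that the printed ordering is a notational transcription, or you explain how to pass from one ordering to the other in the setting where the proposition is applied. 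This is a bookkeeping issue rather than a flaw in your method, but as written the last step of your sketch does not literally match the displayed inequality.
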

This result was first proved for particular cases by Wu~\cite{Wu00}
 and generalized by Chafa\"i~\cite{Cha04}, see also \cite[Theorem 2]{GeIm08}. 
 
The second result is a formula for the time derivative of the $\Phi$-entropy
 along solutions to the L\'evy-Fokker-Planck equation
 \begin{equation} \label{a.LFPe}
  \pa_t u = \mathcal{I}[u] + \diver(x u)\quad\mbox{for }x\in\R^d, \quad t>0,
 \end{equation}
 see~\cite[Proposition 1]{GeIm08}.
In fact, we need this result only for the special case
 of fractional Laplacians $\mathcal{I}=-(-\Delta)^{\theta/2}$ with $0<\theta<2$.

\begin{proposition}[{\cite[Theorem 1]{GeIm08}}] \label{prp.GeIm08.thm1}
Let $p\in(1,2]$ and consider $\Phi:\R_+\to\R$, $s\mapsto s^p$.
Consider the L\'evy-Fokker-Planck equation~\eqref{a.LFPe} 
 for fractional Laplacians $\mathcal{I}=-(-\Delta)^{\theta/2}$ with $0<\theta<2$
and stationary solution $u_\infty(x)=G_\theta(x,1/\theta)$. 
If $u_0$ is a nonnegative function with 
$\mathrm{Ent}_{u_\infty}^\Phi(u_0/u_\infty)<\infty$,
then the solution $u$ of~\eqref{a.LFPe} with initial datum~$u_0$ satisfies 
for all $t\geq 0$,
$$ 
  \mathrm{Ent}_{u_\infty}^\Phi(u(t)/u_\infty)
  \leq e^{-\theta t} \mathrm{Ent}_{u_\infty}^\Phi(u_0/u_\infty).
$$
\end{proposition}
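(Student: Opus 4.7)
The strategy is the Bakry-Emery entropy method adapted to the nonlocal setting: combine an entropy production identity along~\eqref{a.LFPe} with the modified logarithmic Sobolev inequality of Proposition~\ref{prp.logsob}.

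First, set $f(x,t):=u(x,t)/u_\infty(x)$. Since the flow~\eqref{a.LFPe} preserves mass, $\int_{\R^d}fu_\infty\dd{x}=\int_{\R^d}u_0\dd{x}$ is independent of $t$, so
\[
\frac{d}{dt}\mathrm{Ent}_{u_\infty}^{\Phi}(f)
=\int_{\R^d}\Phi'(f)\pa_t u\dd{x}
=\int_{\R^d}\Phi'(f)\bigl[-(-\Delta)^{\theta/2}u+\diver(xu)\bigr]\dd{x}.
\]
Next, I would convert this into an entropy production identity. Writing $u=fu_\infty$, using the singular-integral representation of $(-\Delta)^{\theta/2}$, symmetrizing the resulting double integral in the variables $x$ and $x+y$, integrating by parts in the transport term, and exploiting the stationarity relation $(-\Delta)^{\theta/2}u_\infty=\diver(xu_\infty)$ (which one obtains by differentiating the self-similar identity~\eqref{a.self} in $\lambda$ at $\lambda=1$ and evaluating at $t=1/\theta$), one arrives at the Gentil-Imbert identity (Proposition~1 of \cite{GeIm08})
\[
\frac{d}{dt}\mathrm{Ent}_{u_\infty}^{\Phi}(f)
=-\theta\int_{\R^d}\!\int_{\R^d}D_\Phi\bigl(f(x+y),f(x)\bigr)\,\nu_\infty(\dd{y})\,u_\infty\dd{x},
\]
where $\nu_\infty$ is the L\'evy measure associated with $u_\infty$.

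Third, I would apply Proposition~\ref{prp.logsob}. Since the diffusion matrix $\sigma_\infty$ attached to the fractional Laplacian vanishes, the gradient term in~\eqref{a.logsob} drops out and the inequality reduces to
\[
\mathrm{Ent}_{u_\infty}^{\Phi}(f)
\le\int_{\R^d}\!\int_{\R^d}D_\Phi\bigl(f(x+y),f(x)\bigr)\,\nu_\infty(\dd{y})\,u_\infty\dd{x}.
\]
Combining this with the entropy production identity yields
$\frac{d}{dt}\mathrm{Ent}_{u_\infty}^{\Phi}(f)\le-\theta\,\mathrm{Ent}_{u_\infty}^{\Phi}(f)$,
and Gronwall's lemma produces the claimed exponential decay.

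The main obstacle is the second step, namely pinning down the precise coefficient $\theta$ in the entropy production identity. This rests on a delicate symmetrization of the double integral with singular kernel $|y|^{-d-\theta}$ and on its cancellation against the drift contribution, both linked by the homogeneity $\lambda^d G_\theta(\lambda x,\lambda^\theta t)=G_\theta(x,t)$ of the fractional heat kernel. A secondary, more routine issue is to justify the formal manipulations under the sole assumption $\mathrm{Ent}_{u_\infty}^{\Phi}(u_0/u_\infty)<\infty$; this one would handle by a regularization/truncation argument followed by passage to the limit.
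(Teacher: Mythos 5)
Your plan is essentially the paper's own proof: verify that $u_\infty=G_\theta(\cdot,1/\theta)$ is stationary and infinitely divisible, invoke the entropy-production identity of \cite[Proposition 1]{GeIm08}, bound it from below by the modified logarithmic Sobolev inequality of Proposition~\ref{prp.logsob} (with $\sigma_\infty=0$), and conclude by Gronwall. The only differences are cosmetic and worth noting: the paper checks stationarity by Fourier transform rather than by differentiating \eqref{a.self}, and the factor $\theta$ you flag as the "main obstacle" requires no delicate symmetrization at all --- it comes directly from the relation $\nu(\dd{y})=\theta\,\nu_\infty(\dd{y})$ between the L\'evy measure $\nu(\dd{y})=\dd{y}/|y|^{d+\theta}$ of $-(-\Delta)^{\theta/2}$ and that of $u_\infty$; just be careful that the Bregman distance $D_\Phi$ is not symmetric, so the arguments must be brought into the order appearing in \eqref{a.logsob} by the substitution $y\mapsto-y$ (using $\nu(-\dd{y})=\nu(\dd{y})$ at fixed $x$), not by swapping the two slots of $D_\Phi$ as written in your production identity.
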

\begin{proof}
We claim that
$u_\infty(x)=G_\theta(x,1/\theta)$ is the stationary solution of~\eqref{a.LFPe},
i.e. $\mathcal{I}[u_\infty] + \diver(xu_\infty)=0$. 
% i.e. $-(-\Delta)^{\theta/2}[u_\infty] + \diver(xu_\infty)=0$. 
Indeed, the Fourier transform of this equation is
$|\xi|^\theta\widehat u_\infty + \xi\cdot\na\widehat u_\infty = 0$,
whose solution is given by $\widehat u_\infty(\xi) = e^{-|\xi|^\theta/\theta}$,
and the claim follows from the definition of the fractional heat kernel.
Moreover, $u_\infty(x)=G_\theta(x,1/\theta)$ is the density of an infinitely 
divisible probability measure 
with $\sigma_\infty=0$ and L\'evy measure $\nu_\infty(\dd{y})=\dd{y}/\theta 
|y|^{d+\theta}$. Then, for $v=u/u_\infty$,
\begin{align}
  \frac{\dd{}}{\dd{t}}\textrm{Ent}_{u_\infty}^\Phi(v)
	&= -\int_{\R^d}\int_{\R^d}D_\Phi(v(x),v(x-y))\nu(\dd{y})u_\infty(x) \dd{x} \nonumber \\
	&= -\int_{\R^d}\int_{\R^d}D_\Phi(v(x),v(x+y))\nu(\dd{y})u_\infty(x) \dd{x} 
	\label{a.dEdt} \\
	&\le -\theta \textrm{Ent}_{u_\infty}^\Phi(v). \nonumber
\end{align}
The first identity follows from \cite[Proposition 1]{GeIm08},
in the second one we substituted $y\mapsto -y$ (and used $\nu(-\dd{y})=\nu(\dd{y})$), 
and the inequality is a consequence of the logarithmic Sobolev inequality 
\eqref{a.logsob}. The factor $\theta$ comes from
the relation between the L\'evy measures $\nu$ and $\nu_\infty$,
see the comment after Theorem 1 in \cite{GeIm08}.
The final statement follows from Gronwall's inequality.
\end{proof}

\begin{lemma}\label{lem.heat}
Let $0<\theta\le 2$ and $G_\theta$ be the fundamental solution to the
fractional heat equation \eqref{a.fhe}. Then for all $s>0$,
$$
  \big\|G_\theta(s+1/\theta)-G_\theta(s)\big\|_{L^1(\R^d)}
	\le C(1+\theta s)^{-1/2}.
$$
\end{lemma}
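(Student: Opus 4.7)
The plan is to bound the difference in $L^1(\R^d)$ by integrating the time derivative of the heat kernel. Since $G_\theta$ solves the fractional heat equation~\eqref{a.fhe},
\[
  G_\theta(\cdot,s+1/\theta)-G_\theta(\cdot,s)
  = \int_s^{s+1/\theta}\pa_t G_\theta(\cdot,t)\dd{t}
  = -\int_s^{s+1/\theta}(-\Delta)^{\theta/2}G_\theta(\cdot,t)\dd{t},
\]
so it suffices to establish the pointwise-in-$t$ bound
\[
  \|\pa_t G_\theta(\cdot,t)\|_{L^1(\R^d)}\le \frac{C}{\theta t}\quad\text{for }t>0.
\]

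To obtain this estimate, I would exploit the self-similarity~\eqref{a.self} by writing $G_\theta(x,t)=t^{-d/\theta}G(t^{-1/\theta}x)$ with $G(y):=G_\theta(y,1)$. Differentiating in $t$ and substituting $y=t^{-1/\theta}x$ gives
\[
  \|\pa_t G_\theta(\cdot,t)\|_{L^1(\R^d)}
  \le \frac{1}{\theta t}\left(d\,\|G\|_{L^1(\R^d)} + \int_{\R^d}|y||\na G(y)|\dd{y}\right),
\]
and the second integral is finite by the gradient estimate~\eqref{a.grad} applied at $t=1$, which gives $|y||\na G(y)|\le CG(y)$ on $\R^d$ (the endpoint case $\theta=2$ is handled directly using the explicit Gaussian). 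Integrating in $t$ from $s$ to $s+1/\theta$ then yields
\[
  \|G_\theta(\cdot,s+1/\theta)-G_\theta(\cdot,s)\|_{L^1(\R^d)}
  \le \frac{C}{\theta}\log\!\left(1+\frac{1}{\theta s}\right).
\]

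To translate this into the claimed rate $(1+\theta s)^{-1/2}$, I would split into two regimes. For $\theta s\ge 1$, the elementary inequality $\log(1+r)\le r$ gives a bound of order $1/(\theta s)$, which is dominated by $\sqrt{2}(1+\theta s)^{-1/2}$ in this range. For $0<\theta s<1$ the logarithm can be large, so instead I would use the trivial bound coming from the triangle inequality and the mass normalization $\|G_\theta(\cdot,t)\|_{L^1(\R^d)}=1$, namely $\|G_\theta(\cdot,s+1/\theta)-G_\theta(\cdot,s)\|_{L^1(\R^d)}\le 2$, combined with the elementary inequality $(1+\theta s)^{-1/2}\ge 1/\sqrt{2}$ on this range.

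I do not anticipate a serious obstacle; the key point is that the bound on $\|\pa_t G_\theta(\cdot,t)\|_{L^1(\R^d)}$ must be uniform in $t$ with a constant depending only on $\theta$, which reduces to the finiteness of the weighted integral $\||y||\na G|\|_{L^1(\R^d)}$. This is precisely what~\eqref{a.grad} delivers, and it is worth emphasizing that the same gradient estimate is crucial because the approach via the first moment of $G_\theta(\cdot,1/\theta)$ would fail for $\theta\le 1$, whereas the \emph{time derivative} approach is insensitive to the value of $\theta$ in $(0,2]$.
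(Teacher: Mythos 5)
Your argument is essentially correct (with a constant that depends on $\theta$), but it is not the route the paper takes for Lemma \ref{lem.heat}: it is, in substance, the ``direct estimate'' that the authors present in the Remark immediately \emph{after} the lemma. The paper's own proof is via the entropy method: the rescaled kernel $U(x,t)=e^{dt}G_\theta(e^t x,(e^{\theta t}-1)/\theta)$ solves the fractional Fokker--Planck equation $\pa_t U+(-\Delta)^{\theta/2}U=\diver(xU)$ with stationary state $U_\infty=G_\theta(1/\theta)$, and with $s=(e^{\theta t}-1)/\theta$ the quantity to be bounded equals $\|U(t)-U_\infty\|_{L^1(\R^d)}$; Proposition \ref{prp.GeIm08.thm1} (the Gentil--Imbert $\Phi$-entropy decay) combined with the Csisz\'ar--Kullback inequality then yields $Ce^{-\theta t/2}=C(1+\theta s)^{-1/2}$, with $C$ controlled by the relative entropy of $U(t_0)$ for some small $t_0>0$. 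The trade-off between the two routes is exactly what the paper's remark records: your time-derivative computation even gives the stronger decay $(1+s)^{-1}$ in $s$, but with a prefactor of order $\theta^{-2}$, which degenerates as $\theta\to0$; the entropy route accepts the weaker rate $(1+\theta s)^{-1/2}$ in exchange for a constant without that explicit blow-up, which is why the authors prove the lemma this way and relegate the direct argument to a remark.

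One bookkeeping caveat in your write-up: in the regime $\theta s\ge 1$, the inequality $\log(1+r)\le r$ applied to $\frac{C}{\theta}\log(1+\frac{1}{\theta s})$ gives $C\theta^{-2}s^{-1}$, not a bound ``of order $1/(\theta s)$'' --- the factor $1/\theta$ coming from the length of the time interval survives. Consequently the comparison with the target rate reads $C\theta^{-2}s^{-1}\le \sqrt{2}\,C\,\theta^{-1}(1+\theta s)^{-1/2}$ for $\theta s\ge 1$, i.e.\ the domination by $\sqrt{2}(1+\theta s)^{-1/2}$ only holds after absorbing an extra $1/\theta$ into the constant. This is harmless under the paper's convention that $C$ is merely independent of $x$ and $t$ (so your proof does establish the stated inequality), but it forfeits the uniformity in $\theta$ that motivates the paper's entropy proof; note also that the constants in \eqref{a.asymp}--\eqref{a.grad}, which you invoke, are themselves only asserted to be uniform for $\theta$ in compact subsets of $(0,2)$.
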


\begin{proof}
A direct computation shows that the 
function $U(x,t)=e^{dt}G_\theta(e^tx,(e^{\theta t}-1)/\theta)$
solves the fractional Fokker-Planck equation
$$
  \pa_t U + (-\Delta)^{\theta/2}U = \diver(xU)\quad\mbox{in }\R^d,
$$
and $U_\infty=G_\theta(1/\theta)$ is the stationary solution.
Setting $s=(e^{\theta t}-1)/\theta$ and using the substitution $y=e^{-t}x$
and the self-similar form \eqref{a.self} with $\lambda=e^t$, we find that
\begin{align*}
  \big\|G_\theta(s+1/\theta)-G_\theta(s)\big\|_{L^1(\R^d)}
	&= \int_{\R^d}\big|G_\theta(x,e^{\theta t}/\theta)
	-G_\theta(x,(e^{\theta t}-1)/\theta)\big|\dd{x} \\
	&= \int_{\R^d}\big|G_\theta(e^t y,e^{\theta t}/\theta)
	-G_\theta(e^t y,(e^{\theta t}-1)/\theta)\big|e^{dt} \dd{y} \\
	&= \int_{\R^d}\big|G_\theta(y,1/\theta)-e^{dt}G_\theta(e^t y,(e^{\theta t}-1)/\theta)
	\big|\dd{y} \\
	&= \|U_\infty-U(t)\|_{L^1(\R^d)}.
\end{align*}
Using Proposition~\ref{prp.GeIm08.thm1} and the Csisz\'ar-Kullback inequality
 (see \cite{AMTU01} or \cite[Theorem A.3]{Jue16})
$$
  \|U_\infty-U(t)\|_{L^1(\R^d)} \le Ce^{-\theta(t-t_0)/2} 
	= Ce^{-\theta t_0/2}(1+\theta s)^{-1/2},
$$
and the constant $C>0$ depends on $\textrm{Ent}^\Phi_{U_\infty}[U(t_0)/U_\infty]$
for some small $t_0>0$. 
This finishes the proof. 
\end{proof}

\begin{remark}\rm
A direct estimate allows us to prove 
\begin{equation}\label{rem.1}
  \big\|G_\theta(s+1/\theta)-G_\theta(s)\big\|_{L^1(\R^d)}
	\le C(1+s)^{-1}.
\end{equation}
However, the constant $C$ is of order $1/\theta^2$,
hence, it is not uniformly bounded for $\theta\in(0,2)$.
In order to show \eqref{rem.1}, we observe that the integrand of
$$
  G_\theta(s+1/\theta)-G_\theta(s) = \frac{1}{\theta}\int_0^1
	\pa_t G_\theta(s+\lambda/\theta)d\lambda
$$
can be written for $t=s+\lambda/\theta$ as
\begin{align*}
  \pa_t G_\theta(x,t) &= \pa_t\big(t^{-d/\theta} G_\theta(t^{-1/\theta}x,1)\big) \\
	&= -\frac{1}{\theta t}\Big(d\, t^{-d/\theta}G_\theta(t^{-1/\theta}x,1)
	- t^{-d/\theta}(t^{-1/\theta}x)\cdot\na G_\theta(t^{-1/\theta}x,1)\Big),
\end{align*}
using the self-similar property \eqref{a.self}. By the pointwise bounds
\eqref{a.asymp} and \eqref{a.grad}, we find that
$$
  \|G_\theta(s+1/\theta)-G_\theta(s)\|_{L^1(\R^d)}
	\le C\int_0^1(s+\lambda/\theta)^{-1}d\lambda\le C\int_0^1 s^{-1}d\lambda
  = Cs^{-1}.
$$
Taking into account the bound
$$
  \|G_\theta(s+1/\theta)-G_\theta(s)\|_{L^1(\R^d)}
	\le \|G_\theta(s+1/\theta)\|_{L^1(\R^d)} + \|G_\theta(s)\|_{L^1(\R^d)} \le 2,
$$
the claim \eqref{rem.1} follows.
\end{remark}

%%%%%%%%%%%%%%%%%%%%%%%%%%%%%%%%%%%%%%%%%%%%%%%%%%%%%%%%%%%%%%%%%%%%%%%%%%%%%%%

\section{Proof of the main result}\label{sec.proof}

We split the proof of Theorem \ref{thm.main} into several steps.

{\em Step 1: Time-dependent rescaling of the equation.} 
Let $M=\int_{\R^d}\rho_0\dd{x}>0$. We introduce the rescaled function
\begin{equation}\label{2.defu}
  u(x,t) = \frac{e^{dt}}{M}\rho\bigg(e^t x,\frac{e^{\theta t}-1}{\theta}\bigg),
	\quad x\in\R^d,\ t>0.
\end{equation}

\begin{lemma}\label{lem.scal}
The function $u$ solves the confined drift-diffusion-Poisson system
\begin{equation}\label{2.equ}
\begin{aligned}
  \pa_t u + (-\Delta)^{\theta/2}u = \diver(u\na V) + Me^{-(d-\theta)t}\diver(u\na\phi)
	&\quad\mbox{in }\R^d,\ t>0, \\
	-\Delta\phi = u, \quad u(\cdot,0)=u_0 &\quad\mbox{in }\R^d,
\end{aligned}
\end{equation}
where $V(x)=\frac12|x|^2$ and $u_0=\rho_0/M$.
\end{lemma}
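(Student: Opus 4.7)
The plan is a direct change-of-variables calculation driven by the chain rule, with the only substantive points being the scaling laws of the fractional Laplacian and of the Poisson potential. Write $y=e^t x$ and $\tau=(e^{\theta t}-1)/\theta$, so that \eqref{2.defu} reads $u(x,t)=M^{-1}e^{dt}\rho(y,\tau)$. The initial condition is immediate: at $t=0$ one has $y=x$, $\tau=0$, and the prefactor equals $1/M$, so $u(\cdot,0)=\rho_0/M=u_0$.

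First I would differentiate $u$ in $t$. Using $\pa_t y=y$ and $\pa_t\tau=e^{\theta t}$, the chain rule produces three pieces in $\pa_t u$: the prefactor derivative $d\,u$; a convective piece which, after inverting the relation $\na_x u=M^{-1}e^{(d+1)t}\na_y\rho$, simplifies to $x\cdot\na_x u$; and the remainder $M^{-1}e^{(d+\theta)t}\pa_\tau\rho$. The first two combine into $\diver(xu)=\diver(u\na V)$ with $V(x)=\frac12|x|^2$, which supplies the confinement term on the right-hand side of \eqref{2.equ}.

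For the remaining piece I would substitute \eqref{1.eq} for $\pa_\tau\rho$. The fractional Laplacian satisfies the scaling identity $(-\Delta_y)^{\theta/2}\rho(y,\tau)=Me^{-(d+\theta)t}((-\Delta_x)^{\theta/2}u)(e^{-t}y,t)$, which is verified on the Fourier side from the multiplier definition together with the substitution $\xi\mapsto e^{-t}\eta$. Multiplied by the prefactor $M^{-1}e^{(d+\theta)t}$ and evaluated at $y=e^t x$, this cancels exactly to $-(-\Delta_x)^{\theta/2}u$, matching the diffusive term on the left-hand side of \eqref{2.equ}.

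For the nonlinear drift I would choose $\phi(x,t)=M^{-1}e^{(d-2)t}\psi(e^t x,\tau)$, so that $-\Delta_x\phi=u$ follows at once from $-\Delta_y\psi=\rho$ by the standard scaling of the Laplacian. The chain rule then gives $\na_y\psi(e^t x,\tau)=Me^{-(d-1)t}\na_x\phi(x,t)$, and combined with $\rho(e^t x,\tau)=Me^{-dt}u(x,t)$ the drift current becomes $\rho\na_y\psi=M^2 e^{-(2d-1)t}u\na_x\phi$; converting $\diver_y$ to $\diver_x$ contributes one further factor $e^{-t}$, producing exactly $Me^{-(d-\theta)t}\diver(u\na\phi)$, as claimed in \eqref{2.equ}. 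There is no real obstacle; the work is the careful bookkeeping of exponential prefactors, and the only place with any content is fixing the exponent $(d-2)t$ in the definition of $\phi$ so that the Poisson equation holds without residual constants.
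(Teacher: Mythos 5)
Your proposal is correct and follows essentially the same route as the paper: chain-rule bookkeeping for $\pa_t u$, the Fourier-multiplier scaling of $(-\Delta)^{\theta/2}$, and the rescaling relation between $\phi$ and $\psi$, with all exponents checking out (in particular $-\Delta_x\phi=u$ and the factor $Me^{-(d-\theta)t}$). The only cosmetic difference is that you define $\phi$ directly as $M^{-1}e^{(d-2)t}\psi(e^t x,\tau)$ and convert $\diver_y$ to $\diver_x$ wholesale, whereas the paper defines $\phi$ via the Poisson representation formula and expands $\diver(\rho\na\psi)=\na\rho\cdot\na\psi-\rho^2$ before recombining; both are the same computation.
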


\begin{proof}
First, we observe that $u$ fulfills
\begin{equation}\label{2.aux1}
  \pa_t u - \diver(u\na V) = \frac{1}{M}e^{(d+\theta)t}\pa_t\rho
	\bigg(e^t x,\frac{e^{\theta t}-1}{\theta}\bigg)
\end{equation}
and, substituting $y=e^t x$,
\begin{align*}
  \widehat u(\xi,t) &= \frac{1}{M}(2\pi)^{-d/2}\int_{\R^d} e^{dt}
	\rho\bigg(e^t x,\frac{e^{\theta t}-1}{\theta}\bigg) e^{-ix\cdot\xi}\dd{x} \\
	&= \frac{1}{M}(2\pi)^{-d/2}\int_{\R^d}\rho\bigg(y,\frac{e^{\theta t}-1}{\theta}\bigg)
	e^{-iy\cdot(\exp(-t)\xi)}\dd{y} \\
	&= \frac{1}{M}\widehat\rho\bigg(e^{-t}\xi,\frac{e^{\theta t}-1}{\theta}\bigg).
\end{align*}
With this expression and the substitution $\eta=e^{-t}\xi$, we find that
\begin{align}
  (-\Delta)^{\theta/2}u &= (2\pi)^{-d/2}\int_{\R^d}|\xi|^\theta\widehat u(\xi,t)
	e^{ix\cdot\xi}\dd{\xi} \nonumber \\
	&= \frac{1}{M}(2\pi)^{-d/2}\int_{\R^d}|\xi|^\theta\widehat\rho
	\bigg(e^{-t}\xi,\frac{e^{\theta t}-1}{\theta}\bigg)e^{ix\cdot\xi}\dd{\xi} \nonumber \\
	&= \frac{1}{M}(2\pi)^{-d/2}e^{(d+\theta)t}\int_{\R^d}|\eta|^\theta
	\widehat\rho\bigg(\eta,\frac{e^{\theta t}-1}{\theta}\bigg)e^{i(\exp(t)x)\cdot\eta}
	\dd{\eta} \nonumber \\
	&= \frac{1}{M}e^{(d+\theta)t}(-\Delta)^{\theta/2}\rho
	\bigg(e^t x,\frac{e^{\theta t}-1}{\theta}\bigg). \label{2.aux2}
\end{align}
Adding \eqref{2.aux1}-\eqref{2.aux2} and inserting \eqref{1.eq}
leads to
\begin{align}
  \pa_t u &+ (-\Delta)^{\theta/2}u - \diver(u\na V) 
	= \frac{1}{M}e^{(d+\theta)t}\diver(\rho\na\psi) \nonumber \\
	&= \frac{1}{M}e^{(d+\theta)t}(\na\rho\cdot\na\psi)
	\bigg(e^t x,\frac{e^{\theta t}-1}{\theta}\bigg)
	- \frac{1}{M}e^{(d+\theta)t}\rho^2\bigg(e^t x,\frac{e^{\theta t}-1}{\theta}\bigg).
	\label{2.aux3}
\end{align}
It remains to express the right-hand side in terms of $u$.

By the representation formula for solutions of the Poisson equation $-\Delta \phi=u$
and the substitution $z=e^t y$, it follows that
\begin{align}
  \na\phi(x) &= c_d\int_{\R^d}\frac{x-y}{|x-y|^d}u(y,t)\dd{y}
	= \frac{c_d}{M}e^{dt}\int_{\R^d}\frac{x-y}{|x-y|^d}
	\rho\bigg(e^t y,\frac{e^{\theta t}-1}{\theta}\bigg)\dd{y} \label{2.naphi} \\
  &= \frac{c_d}{M}e^{(d-1)t}\int_{\R^d}\frac{e^tx-z}{|e^t x-z|^d}
	\rho\bigg(z,\frac{e^{\theta t}-1}{\theta}\bigg)dz
	= \frac{1}{M}e^{(d-1)t}\na\psi\bigg(e^t x,\frac{e^{\theta t}-1}{\theta}\bigg),
	\nonumber
\end{align}
where $c_d=\Gamma(d/2)/(2\pi^{d/2})$. Hence, since
$$
  \na u(x,t) = \frac{1}{M}e^{(d+1)t}
	\na\rho\bigg(e^t x,\frac{e^{\theta t}-1}{\theta}\bigg),
$$
the first term on the right-hand side of \eqref{2.aux3} becomes
$$
  \frac{1}{M}e^{(d+\theta)t}(\na\rho\cdot\na\psi)
	\bigg(e^t x,\frac{e^{\theta t}-1}{\theta}\bigg)
	= Me^{-(d-\theta)t}(\na u\cdot\na\phi)(x,t).
$$
Taking the square of the definition of $u$, the second term on the right-hand side
of \eqref{2.aux3} can be written as
$$
  \frac{1}{M}e^{(d+\theta)t}\rho^2\bigg(e^t x,\frac{e^{\theta t}-1}{\theta}\bigg)
	= Me^{-(d-\theta)t}u^2(x,t).
$$
Inserting the previous two expressions in \eqref{2.aux3} and using
$\na u\cdot\na\phi-u^2=\diver(u\na\phi)$, we finish the proof.
\end{proof}

\begin{lemma}\label{lem.bound}
The solution $u$ to \eqref{2.equ} satisfies $u\in 
L^\infty(0,\infty;L^1(\R^d)\cap L^\infty(\R^d))$.
\end{lemma}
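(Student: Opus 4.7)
The plan is to exploit the explicit definition \eqref{2.defu} of $u$ and reduce the claim to the a priori bounds \eqref{1.p1}--\eqref{1.p2} already assumed on $\rho$, through a change of variables $y=e^t x$ and the time-change $s=(e^{\theta t}-1)/\theta$. Since $u$ is obtained from $\rho$ by a mass-preserving, parabolic-type rescaling, both norms should transfer almost immediately; no PDE argument on \eqref{2.equ} itself is needed.

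First, I would handle the $L^1$ bound. Substituting $y=e^t x$ in the integral defining $\|u(t)\|_{L^1(\R^d)}$, the Jacobian $e^{-dt}$ cancels the factor $e^{dt}$ in \eqref{2.defu}, leaving
\[
  \|u(t)\|_{L^1(\R^d)} = \frac{1}{M}\int_{\R^d}\rho\bigl(y,s\bigr)\dd{y} = \frac{\|\rho(s)\|_{L^1(\R^d)}}{M} = 1
\]
by the conservation of mass in~\eqref{1.p2}. In particular, $\|u(t)\|_{L^1(\R^d)}$ is constant in time.

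For the $L^\infty$ bound I would argue pointwise: from \eqref{2.defu} and the decay estimate in~\eqref{1.p2} with $p=\infty$,
\[
  \|u(t)\|_{L^\infty(\R^d)} \le \frac{C}{M}\,e^{dt}\bigl(1+s\bigr)^{-d/\theta}
  = \frac{C}{M}\left(\frac{\theta\,e^{\theta t}}{\theta+e^{\theta t}-1}\right)^{d/\theta}.
\]
The function $t\mapsto \theta e^{\theta t}/(\theta+e^{\theta t}-1)$ equals $1$ at $t=0$ and tends to $\theta$ as $t\to\infty$; a short monotonicity check (it is monotone, increasing if $\theta>1$, decreasing if $\theta<1$) shows that it stays between $\min(1,\theta)$ and $\max(1,\theta)$, hence is bounded uniformly in $t\ge 0$ by a constant depending only on $\theta\le 2$. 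Combined with $\rho_0\in L^1\cap L^\infty$ ensuring $\|u(0)\|_{L^\infty}$ is finite, this yields $\|u(t)\|_{L^\infty(\R^d)}\le C$ for all $t\ge 0$.

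There is no real obstacle here: the argument is a routine substitution combined with the global-in-time hypotheses \eqref{1.p1}--\eqref{1.p2} that the solution $\rho$ is already assumed to satisfy. The only point that requires a line of care is verifying that the prefactor $e^{dt}(1+s)^{-d/\theta}$ stays bounded as $t\to\infty$, which is why the precise form of the time-change $s=(e^{\theta t}-1)/\theta$ (rather than $s=e^{\theta t}/\theta$) is important.
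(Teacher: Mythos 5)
Your proof is correct and takes essentially the same route as the paper: the same substitution $y=e^t x$ together with \eqref{1.p2}, reducing everything to the boundedness of the prefactor $\theta e^{\theta t}/(\theta+e^{\theta t}-1)$. The only cosmetic difference is that you treat $p=1$ and $p=\infty$ directly, while the paper bounds all $L^p$ norms uniformly and passes to the limit $p\to\infty$.
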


\begin{proof}
We use the estimate in \eqref{1.p2} and the definition of $u$ to find that
\begin{align*}
 \|u(t)\|_{L^p(\R^d)}^p
  &= \frac{e^{dpt}}{M^p}\int_{\R^d}
	\rho\bigg(e^t x, \frac{e^{\theta t}-1}{\theta}\bigg)^p \dd{x}
   = \frac{e^{dt(p-1)}}{M^p}\int_{\R^d}
	\rho\bigg(y,\frac{e^{\theta t}-1}{\theta}\bigg)^p \dd{y} \\
  &\le \frac{C}{M^p} e^{dt(p-1)} 
	\bigg(1 + \frac{e^{\theta t}-1}{\theta}\bigg)^{-d(p-1)/\theta}
  \le \frac{C}{M^p} 
	\bigg(\frac{\theta e^{\theta t}}{\theta + e^{\theta t}-1}\bigg)^{d(p-1)/\theta}
  \le \frac{C}{M^p}.
\end{align*}
Consequently, $u(t)$ is bounded in any $L^p$ norm uniformly in time and we conclude
by passing to the limit $p\to\infty$.
\end{proof}

{\em Step 2: Time decay of $u(t)$.} We show that $u$ converges exponentially fast to
the fundamental solution $G_\theta(1/\theta)$ (see Section \ref{sec.frac}). 
First, we relate the difference of $\rho$ to the fundamental solution $G_\theta$ 
and the difference of $u$ to $G_\theta$.

\begin{lemma}\label{lem.rhou}
Let $s=(e^{\theta t}-1)/\theta$. Then it holds that
$$
  \|\rho(s)-MG_\theta(s+1/\theta)\|_{L^1(\R^d)}
	= M\|u(t) - G_\theta(1/\theta)\|_{L^1(\R^d)}.
$$
\end{lemma}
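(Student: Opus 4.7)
The plan is to establish the identity by a change of variables, using two ingredients: the explicit definition \eqref{2.defu} of $u$ in terms of $\rho$, and the self-similar scaling property \eqref{a.self} of the fractional heat kernel. Since $s=(e^{\theta t}-1)/\theta$ gives $s+1/\theta = e^{\theta t}/\theta$, the scaling identity \eqref{a.self} applied with $\lambda=e^t$ and time $1/\theta$ yields
\[
  G_\theta(x,1/\theta) = e^{dt}\,G_\theta\big(e^t x,\,s+1/\theta\big),
\]
which is exactly the scaling needed to match the $e^{dt}/M$ factor in the definition of $u$.

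Next I would combine this with \eqref{2.defu}, rewritten as $Mu(x,t)=e^{dt}\rho(e^tx,s)$, so that the integrand in $M\|u(t)-G_\theta(1/\theta)\|_{L^1(\R^d)}$ becomes
\[
  e^{dt}\big|\rho(e^tx,s) - M\,G_\theta(e^tx,s+1/\theta)\big|.
\]
Then the substitution $y=e^tx$, with Jacobian $e^{dt}$, converts this integral directly into $\|\rho(s)-MG_\theta(s+1/\theta)\|_{L^1(\R^d)}$, which is the claimed identity.

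There is essentially no obstacle here: the lemma is just a bookkeeping computation verifying that the time-dependent rescaling introduced in Step 1 is consistent with the self-similar scaling of the fundamental solution $G_\theta$. The only care needed is aligning the time arguments, namely checking that the rescaled stationary state $G_\theta(\,\cdot\,,1/\theta)$ corresponds, under the inverse rescaling, precisely to the shifted profile $G_\theta(\,\cdot\,,s+1/\theta)$ rather than $G_\theta(\,\cdot\,,s)$; this shift by $1/\theta$ is what allows the subsequent proof of Theorem \ref{thm.main} to use Lemma \ref{lem.heat} in order to pass from convergence to $G_\theta(s+1/\theta)$ to the desired rate against $G_\theta(s)$.
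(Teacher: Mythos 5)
Your proof is correct and follows essentially the same route as the paper: both use the definition \eqref{2.defu}, the substitution $y=e^tx$, and the self-similarity \eqref{a.self} to convert $G_\theta(\cdot,1/\theta)$ into the shifted profile $G_\theta(\cdot,s+1/\theta)$ (you apply the scaling before the change of variables, the paper after, which is immaterial). The key time-argument check $s+1/\theta=e^{\theta t}/\theta$ is handled correctly.
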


\begin{proof}
By the definition of $u(x,t)$ and the substitution $y=e^t x$, we compute
\begin{align*}
  M\|u(t) - G_\theta(1/\theta)\|_{L^1(\R^d)}
	&= \int_{\R^d}\bigg|e^{dt}\rho\bigg(e^t x,\frac{e^{\theta t}-1}{\theta}\bigg)
	- MG_\theta(x,1/\theta)\bigg|\dd{x} \\
	&= \int_{\R^d}\bigg|\rho\bigg(y,\frac{e^{\theta t}-1}{\theta}\bigg)
	- Me^{-dt}G_\theta(e^{-t}y,1/\theta)\bigg|\dd{y}.
\end{align*}
We set $s=(e^{\theta t}-1)/\theta$ and use the self-similar form 
\eqref{a.self} with $\lambda=e^{-t}=(1+s\theta)^{-1/\theta}$:
\begin{align*}
  M\|u(t) - G_\theta(1/\theta)\|_{L^1(\R^d)}
	&= \int_{\R^d}\big|\rho(x,s) - M(1+s\theta)^{-d/\theta}
	G_\theta\big((1+s\theta)^{-1/\theta}x,1/\theta\big)\big|\dd{x} \\
	&= \int_{\R^d}\big|\rho(x,s) - MG_\theta\big(x,(1+s\theta)/\theta\big)\big|\dd{x} \\
  &= \|\rho(s)-MG_\theta(s+1/\theta)\|_{L^1(\R^d)},
\end{align*}
which concludes the proof.
\end{proof}

The previous lemma shows that it is sufficient to estimate $u(t)-G_\theta(1/\theta)$.

\begin{lemma}\label{lem.decayu}
Let the assumptions of Theorem \ref{thm.main} hold. Then 
the solution $u$ to \eqref{2.equ} satisfies
$$
  \|u(t)-G_\theta(1/\theta)\|_{L^1(\R^d)} \le Ce^{-\theta t/2}, \quad t>0,
$$
where $C>0$ is some constant.
\end{lemma}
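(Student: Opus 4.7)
The plan is to implement the entropy method on the rescaled system~\eqref{2.equ}. Fix some $p\in(1,2]$ (for instance $p=2$), set $v:=u/u_\infty$ with $u_\infty(x)=G_\theta(x,1/\theta)$, and consider the relative $\Phi_p$-entropy $E_p:=\mathrm{Ent}_{u_\infty}^{\Phi_p}(v)$. I would first check that $E_p[u_0/u_\infty]<\infty$: since $u_\infty(x)$ behaves like $|x|^{-(d+\theta)}$ at infinity, the moment hypothesis $|x|^d\rho_0\in L^q(\R^d)$ with $q>d/\theta$ provides, via the weighted bound of Lemma~\ref{lem.estrho}, enough decay of $u_0$ to control $\int u_0^p u_\infty^{1-p}\dd{x}$.

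Next I differentiate $E_p$ along~\eqref{2.equ}. Since $\partial_t u_\infty=0$, one has $\dd{E_p}/\dd{t}=p\int v^{p-1}\partial_t u\,\dd{x}$, which I would split into the contribution from $-(-\Delta)^{\theta/2}u+\diver(u\nabla V)$ and the Poisson contribution from $Me^{-(d-\theta)t}\diver(u\nabla\phi)$. The first is exactly the L\'evy-Fokker-Planck term and is bounded above by $-\theta E_p$ via the modified logarithmic Sobolev inequality~\eqref{a.logsob}, as in the derivation of Proposition~\ref{prp.GeIm08.thm1}. Two integrations by parts combined with $-\Delta\phi=u$ rewrite the second as
\[
  (p-1)Me^{-(d-\theta)t}\Big[\int_{\R^d}v^p\,\nabla u_\infty\cdot\nabla\phi\dd{x}-\int_{\R^d}v^{p+1}u_\infty^2\dd{x}\Big],
\]
and the second bracketed term has a favorable sign and can be discarded. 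For the first, the gradient estimate~\eqref{a.grad} at $t=1/\theta$ gives $|\nabla u_\infty|\le C_\theta u_\infty$, while a split-at-radius-$R$ argument applied to the Riesz kernel, together with Lemma~\ref{lem.bound}, yields the uniform-in-time bound $\|\nabla\phi(\cdot,t)\|_{L^\infty(\R^d)}\le C\|u(t)\|_{L^\infty(\R^d)}^{(d-1)/d}\|u(t)\|_{L^1(\R^d)}^{1/d}\le C$. This bounds the Poisson contribution above by $\sigma(t)(E_p+1)$, where $\sigma(t):=Ce^{-(d-\theta)t}$.

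Putting the two pieces together yields $\dd{E_p}/\dd{t}\le -\theta E_p+\sigma(t)(E_p+1)$. The hypothesis $\theta<d/2$ enters exactly here: it is equivalent to $d-\theta>\theta$, which guarantees that both $\sigma$ and $t\mapsto e^{\theta t}\sigma(t)$ are integrable on $(0,\infty)$. Gronwall's lemma then produces $E_p[u(t)/u_\infty]\le Ce^{-\theta t}$, and the Csisz\'ar-Kullback inequality (see~\cite{AMTU01}) converts this into $\|u(t)-u_\infty\|_{L^1(\R^d)}\le C\,E_p^{1/2}\le Ce^{-\theta t/2}$. The main obstacle I expect is the Poisson-term estimate itself; the delicate point is not merely $\sigma\in L^1(0,\infty)$ (which only requires $\theta<d$), but rather the stronger $e^{\theta t}\sigma\in L^1(0,\infty)$, which is what singles out the threshold $\theta<d/2$ and permits Gronwall to close at the full rate~$e^{-\theta t}$.
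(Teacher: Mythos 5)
Your proposal follows essentially the same route as the paper: the same rescaled entropy $E_p$, the modified logarithmic Sobolev inequality giving the $-\theta E_p$ term, an integration by parts plus $-\Delta\phi=u$ for the Poisson contribution with uniform bounds on $\nabla\log G_\theta(1/\theta)$ and $\nabla\phi$, and then Gronwall with $\sigma(t)=Ce^{-(d-\theta)t}$ (where $\theta<d/2$ yields the full rate) followed by Csisz\'ar--Kullback; discarding the sign-definite term $-\int v^{p+1}u_\infty^2\dd{x}$ instead of bounding it by $\|u\|_{L^\infty}$ is an inessential variant. One caveat: the parenthetical choice $p=2$ is not admissible in general, since the finiteness of $E_p$ (the paper's Step 4) requires $d/\theta<q<dp/((p-1)\theta)$, so $p$ must be taken sufficiently close to $1$ depending on the exponent $q$ in the moment hypothesis; with that adjustment the argument matches the paper's.
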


\begin{proof}
Let $1<p<2$ and set $v(t)=u(t)/G_\theta(1/\theta)$ and
\begin{equation}\label{2.ent}
  E_p[v(t)] = \int_{\R^d}v(x,t)^p G_\theta(x,1/\theta)\dd{x}
	- \bigg(\int_{\R^d}v(x,t)G_\theta(x,1/\theta)\dd{x}\bigg)^p.
\end{equation}
We prove in Step 4 below that this functional is well defined.
Note that $E_p[v]=\textrm{Ent}_{u_\infty}^\Phi(v)$, where $\Phi(s)=s^p$ and 
$\textrm{Ent}_{u_\infty}^\Phi$ is the entropy defined in Section \ref{sec.levy}.
We differentiate $E_p[v]$ with respect to time. The derivative of the
second integral in $E_p[v]$
vanishes since $\int_{\R^d}vG_\theta(1/\theta)\dd{x}=\int_{\R^d}u\dd{x}=1$. 
Therefore, by \eqref{2.equ},
\begin{align*}
  \frac{\dd{E_p}}{\dd{t}}[v(t)] &= p\int_{\R^d}v(x,t)^{p-1}\pa_t u(x,t)\dd{x} \\
	&= p\int_{\R^d}v^{p-1}\big(-(-\Delta)^{\theta/2}u + \diver(u\na V)\big)\dd{x}
	+ pMe^{-(d-\theta)t}\int_{\R^d}v^{p-1}\diver(u\na\phi)\dd{x}.
\end{align*}
Using the L\'evy operator $\mathcal{I}[u]=\diver(x\na V)-(-\Delta)^{\theta/2}$ 
with $V(x)=\frac12|x|^2$,
the calculations in the proof of \cite[Proposition 1]{GeIm08} show that
\begin{align*}
  p\int_{\R^d} & v^{p-1} \big(-(-\Delta)^{\theta/2}u + \diver(u\na V)\big)\dd{x} \\
  &= -\int_{\R^d}\int_{\R^d}D_p(v(x,t),v(x+y,t))\nu(\dd{y})G_\theta(x,1/\theta)\dd{x},
\end{align*}
where $D_p(a,b)=a^p+b^p-pb^{p-1}(a-b)$ is the Bregman distance
(see Section \ref{sec.levy}) and $\nu(\dd{y})=\dd{y}/|y|^{d+\theta}$. 
Moreover, the modified logarithmic Sobolev inequality \eqref{a.logsob} gives
$$
  \int_{\R^d}\int_{\R^d}D_p(v(x,t),v(x+y,t))\nu(\dd{y})G_\theta(x,1/\theta)\dd{x}
	\ge \theta E_p[v],
$$
since $u_\infty(x)=G_\theta(x,1/\theta)$ is the density of an infinitely 
divisible probability measure with $\sigma_\infty=0$ and L\'evy measure 
$\nu_\infty(\dd{y})=\dd{y}/\theta |y|^{d+\theta}$.
Putting these estimates together leads to
\begin{equation}\label{2.dEdt}
  \frac{\dd{E_p}}{\dd{t}}[v]  \le -\theta E_p[v]
	+ pMe^{-(d-\theta)t}\int_{\R^d}v^{p-1}\diver(vG_\theta(1/\theta)\na\phi)\dd{x},
\end{equation}
and it remains to estimate the last integral. 
To this end, we differentiate and integrate by parts:
\begin{align*}
  p\int_{\R^d}v^{p-1}\diver(vG_\theta(1/\theta)\na\phi)\dd{x}
	&= \int_{\R^d}\big(pv^p\diver(G_\theta(1/\theta)\na\phi) 
	+ \na(v^p)\cdot\big(G_\theta(1/\theta)\na\phi\big)\big)\dd{x} \\
	&= (p-1)\int_{\R^d}v^p\diver(G_\theta(1/\theta)\na\phi)\dd{x}.
\end{align*}
Differentiating $G_\theta(1/\theta)\na\phi$ and using $-\Delta\phi=u$, we obtain
\begin{align*}
  \bigg|\int_{\R^d}&v^{p}\diver(G_\theta(1/\theta)\na\phi)\dd{x}\bigg| \\
	&\le \int_{\R^d}\big|v^p\na\log G_\theta(1/\theta)\cdot\na\phi
	\big|G_\theta(1/\theta)\dd{x}
	+ \int_{\R^d}v^p uG_\theta(1/\theta)\dd{x} \\
	&\le \big(\|\na\log G_\theta(1/\theta)\|_{L^\infty(\R^d)}
	\|\na\phi\|_{L^\infty(\R^d)}
	+ \|u\|_{L^\infty(\R^d)}\big)\int_{R^d}v^p G_\theta(1/\theta)\dd{x}.
\end{align*}
We claim that the $L^\infty$ norms are bounded uniformly in time. 
Indeed, by Lemma \ref{lem.bound}, the $L^\infty$ norm of $u(t)$ is uniformly bounded.
By \eqref{a.grad}, we find that
$$
  |\na\log G_\theta(1/\theta)| = \frac{|\na G_\theta(1/\theta)|}{G_\theta(1/\theta)}
	\le C\min\{\theta^{1/\theta},|x|^{-1}\} \le C,
$$
which shows the bound for $\|\na\log G_\theta(1/\theta)\|_{L^\infty(\R^d)}$.
Finally, we deduce from Poisson's representation formula that
\begin{align*}
  |\na\phi(x,t)| &\le C\int_{\R^d}\frac{|u(y,t)|}{|x-y|^{d-1}}\dd{y} \\
	&= C\int_{\{|x-y|<1\}}\frac{|u(y,t)|}{|x-y|^{d-1}}\dd{y}
	+ C\int_{\{|x-y|\ge 1\}}\frac{|u(y,t)|}{|x-y|^{d-1}}\dd{y} \\
	&\le C\|u(t)\|_{L^\infty(\R^d)}\int_{\{|x-y|<1\}}\frac{\dd{y}}{|x-y|^{d-1}}
	+ C\int_{\R^d}|u(y,t)|\dd{y} \\
	&\le C\big(\|u(t)\|_{L^\infty(\R^d)}
	+ \|u(t)\|_{L^1(\R^d)}\big),
\end{align*}
and we have already seen that the right-hand side is bounded, 
due to Lemma~\ref{lem.bound}. Hence, \eqref{2.dEdt} becomes
$$
  \frac{\dd{E_p}}{\dd{t}}[v] \le -\theta E_p[v] 
	+ (p-1)CMe^{-(d-\theta)t}\int_{\R^d}v^p G_\theta(1/\theta)\dd{x},
$$
where $C:=\sup_{0<t<\infty}(\|\na\log G_\theta(1/\theta)\|_{L^\infty(\R^d)}
\|\na\phi\|_{L^\infty(\R^d)} + \|u\|_{L^\infty(\R^d)})$.
By definition of the entropy $E_p[v]$ and the mass conservation $\int_{\R^d}u\dd{x}=1$, 
the integral on the right-hand side equals
$$
  \int_{R^d}v^p G_\theta(1/\theta)\dd{x} 
	= E_p[v] + \bigg(\int_{\R^d}vG_\theta \dd{x}\bigg)^p
	= E_p[v] + 1,
$$
and we end up with
$$
  \frac{\dd{E_p}}{\dd{t}}[v] \le \big(-\theta + \sigma(t)\big) E_p[v] + \sigma(t),
	\quad\mbox{where }\sigma(t)=(p-1)CMe^{-(d-\theta)t}.
$$
We apply the Gronwall inequality to infer that
$$
  E_p[v(t)] \le e^{-\theta t + S(t)}\bigg(E_p[v(0)] + \int_0^t e^{\theta s-S(s)}
	\sigma(s)\dd{s}\bigg),
$$
where $S(t)=\int_0^t\sigma(s)\dd{s}$ is bounded uniformly in $t\in(0,\infty)$
since $d-\theta>0$. It follows from
$e^{\theta s}\sigma(s)=(p-1)CMe^{(2\theta-d)s}$ and $2\theta-d<0$ that, 
for some constants $C>0$,
$$
  E_p[v(t)] \le Ce^{-\theta t} \bigg(E_p[v(0)] + \int_0^t e^{(2\theta-d)s}\dd{s}\bigg)
	\le Ce^{-\theta t}.
$$

Finally, we use the Csisz\'ar-Kullback inequality (see \cite{AMTU01} or 
\cite[Theorem A.3]{Jue16}), applied to the function $z\mapsto z^p-1$ for $1<p<2$,
\begin{align*}
  \|u(t)-G_\theta(1/\theta)\|_{L^1(\R^d)}^2
	&\le \frac{2}{p(p-1)}\int_{\R^d}
	\bigg\{\bigg(\frac{u}{G_\theta(1/\theta)}\bigg)^p-1\bigg\}G_\theta(1/\theta)\dd{x} \\
	&= \frac{2}{p(p-1)}\int_{\R^d}(v^p-1)G_\theta(1/\theta)\dd{x} \\
	&= \frac{2}{p(p-1)}\bigg(\int_{\R^d}v^pG_\theta(1/\theta)\dd{x} - 1\bigg) \\
	&= \frac{2}{p(p-1)}E_p[v(t)] 
	\le Ce^{-\theta t},
\end{align*}
where we used the fact that $G_\theta(1/\theta)$ is normalized. 
\end{proof}

{\em Step 3: Time decay of $\rho(t)$.} Lemmas \ref{lem.rhou} and \ref{lem.decayu}
show that
\begin{align*}
  \|\rho(s)-MG_\theta(s+1/\theta)\|_{L^1(\R^d)}
	&= M\|u(t)-G_\theta(1/\theta)\|_{L^1(\R^d)} \\
	&\le CMe^{-\theta t/2}
	= CM(1+\theta s)^{-1/2},
\end{align*}
since $s=(e^{\theta t}-1)/\theta$ is equivalent to $e^{\theta t}=1+\theta s$.
We infer from Lemma \ref{lem.heat} that
\begin{align*}
  \|\rho(s)-MG_\theta(s)\|_{L^1(\R^d)}
	&\le \|\rho(s)-MG_\theta(s+1/\theta)\|_{L^1(\R^d)} \\
	&\phantom{xx}{}+ M\|G_\theta(s+1/\theta)-G_\theta(s)\|_{L^1(\R^d)} \\
	&\le CM (1+\theta s)^{-1/2}.
	\nonumber
\end{align*}

{\em Step 4: Well-posedness of the entropy \eqref{2.ent}.}
It remains to verify that \eqref{2.ent} is well defined. We reformulate $E_p[v]$
in terms of $\rho$. With the substitutions $y=e^{t}x$ and $s=(e^{\theta t}-1)/\theta$,
we find from definition \eqref{2.defu} that
\begin{align*}
  E_p[v] &= \frac{e^{dpt}}{M^p}\int_{\R^d}\rho\bigg(e^t x,\frac{e^{\theta t}-1}{\theta}
	\bigg)^p G_\theta(x,1/\theta)^{1-p}\dd{x} 
	- \frac{e^{dpt}}{M^p}\bigg(\int_{\R^d}\rho\bigg(e^t x,\frac{e^{\theta t}-1}{\theta}
	\bigg)\dd{x}\bigg)^p \\
	&= \frac{e^{d(p-1)t}}{M^p}\int_{\R^d}\rho(y,s)^p G_\theta(e^{-t}y,1/\theta)^{1-p}\dd{y}
	- \frac{1}{M^p}\bigg(\int_{\R^d}\rho(y,s)\dd{y}\bigg)^p.
\end{align*}
Using the self-similar form \eqref{a.self} with $\lambda=e^{-t}$ yields
$$
  E_p[v] = \frac{1}{M^p}\int_{\R^d}\bigg(\frac{\rho(y,s)}{G_\theta(y,s+1/\theta)}
	\bigg)^p G_\theta(y,s+1/\theta)\dd{y}
	- \frac{1}{M^p}\bigg(\int_{\R^d}\rho(y,s)\dd{y}\bigg)^p.
$$
Therefore, it is sufficient to show that the integral
\begin{equation}\label{2.int}
  I(t) := \int_{\R^d}\bigg(\frac{\rho(x,t)}{G_\theta(x,t+1/\theta)}\bigg)^p 
	G_\theta(x,t+1/\theta)\dd{x}
\end{equation}
converges for any fixed $t>0$ and some $1<p<2$. 
The trivial estimate (taking into account \eqref{a.asymp})
$$
  I(t) \le \|\rho(t)\|^p_{L^\infty(\R^d)}\int_{\R^d}G(x,t+1/\theta)^{1-p}\dd{x}
	\le Ct^{-d/\theta}\int_{\R^d}(1+|x|)^{(d+\theta)(p-1)}\dd{x}
$$
cannot be used since $p-1>0$, so we have to derive a finer estimate for 
$\rho(t)$ in some $L^q(\R^d)$. This is done in the following lemma.

\begin{lemma}\label{lem.estrho}
Let $d\ge 2$ and either $1<\theta\le 2$, $q>d/(d-\theta)$ or
$0<\theta\le 1$, $q>d/\theta$. Furthermore, let $|x|^d\rho_0\in L^q(\R^d)$, 
and the solution to \eqref{1.eq} satisfies \eqref{1.p1}-\eqref{1.p2}. Then
$$
  \big\||x|^d\rho(t)\big\|_{L^q(\R^d)} \le C(1+t)^{d/(\theta q)}, \quad t>0.
$$
\end{lemma}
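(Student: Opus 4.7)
My approach starts from the Duhamel representation
\begin{equation*}
  \rho(t) = G_\theta(t)*\rho_0 + \int_0^t \nabla G_\theta(t-s)*(\rho(s)\nabla\psi(s))\,\dd{s},
\end{equation*}
obtained by moving the divergence from $\rho\nabla\psi$ onto the fractional heat kernel. The weight is then controlled pointwise by the elementary bound $|x|^d \le 2^{d-1}(|x-y|^d + |y|^d)$, applied inside each convolution. The target rate is dictated by the identity $\||z|^d G_\theta(\cdot,\tau)\|_{L^q(\R^d)} = C\tau^{d/(\theta q)}$, which follows from the self-similar scaling~\eqref{a.self} and the pointwise bounds~\eqref{a.asymp}, and which converges precisely when $q>d/\theta$ (the condition appearing in both cases of the hypothesis).

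For the linear part, two applications of Young's convolution inequality yield
\begin{equation*}
  \||x|^d(G_\theta(t)*\rho_0)\|_{L^q(\R^d)}
  \le C\||z|^dG_\theta(\cdot,t)\|_{L^q(\R^d)}\|\rho_0\|_{L^1(\R^d)}
  + \|G_\theta(t)\|_{L^1(\R^d)}\||y|^d\rho_0\|_{L^q(\R^d)}
  \le C(1+t)^{d/(\theta q)},
\end{equation*}
with constant depending on $\|\rho_0\|_{L^1}$ and $\||y|^d\rho_0\|_{L^q}$. For the nonlinear part, the same weight splitting produces two contributions. The first carries the weight on the kernel, involving $\||z|^d\nabla G_\theta(\cdot,t-s)\|_{L^r(\R^d)}$ for a suitable $r$, whose exact time behavior comes from the gradient bound~\eqref{a.grad} combined with~\eqref{a.asymp} and the self-similar scaling, and which I would couple via Young's inequality with $\|\rho(s)\nabla\psi(s)\|_{L^p(\R^d)}$ (uniformly controlled in $s$ by Lemma~\ref{lem.bound} and by the Hardy--Littlewood--Sobolev/Calder\'on--Zygmund estimates for $\nabla\psi$, exactly as already used in the proof of Lemma~\ref{lem.decayu}). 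The second carries the weight on the density and reduces after H\"older to $\||y|^d\rho(s)\|_{L^q(\R^d)}\|\nabla\psi(s)\|_{L^\infty(\R^d)}$ convolved in time against a norm of $\nabla G_\theta(t-s)$, producing a self-referential term that I would close by Gronwall's inequality.

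The main obstacle is the time integrability of $\nabla G_\theta(\tau)$: since $\|\nabla G_\theta(\tau)\|_{L^1(\R^d)}\sim\tau^{-1/\theta}$, the naive Young choice $L^1 * L^q \to L^q$ is integrable near $\tau=0$ only when $\theta>1$, which matches the subcritical case with the weaker restriction $q>d/(d-\theta)$. In the supercritical range $0<\theta\le 1$, one must instead use Young's inequality with $r>1$, trading the singular $L^1$ kernel for the milder $\|\nabla G_\theta(\tau)\|_{L^r(\R^d)}$ at the price of requiring better spatial integrability of $\rho\nabla\psi$, which is precisely what forces the stronger lower bound $q>d/\theta$. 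Balancing these exponents against the decay estimates~\eqref{1.p2} and applying Gronwall's inequality then delivers the stated bound $C(1+t)^{d/(\theta q)}$.
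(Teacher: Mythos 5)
Your treatment of the subcritical range $1<\theta\le 2$ follows essentially the same route as the paper: Duhamel formula with the divergence moved onto the kernel, the splitting $|x|^d\le C(|x-y|^d+|y|^d)$, Young's inequality, the rate coming from $\||x|^dG_\theta(t)\|_{L^q}\sim(1+t)^{d/(\theta q)}$, and a self-referential term closed by absorption (the paper uses the smallness of the prefactor $\tau(t)=(1+t)^{-\frac{d}{\theta}(1-\frac1q)+1}$, which is where the hypothesis $q>d/(d-\theta)$ actually enters, rather than a Gronwall argument with a singular kernel; this is a technical point you would need to make precise, but it is not a conceptual obstacle).

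The genuine gap is the supercritical case $0<\theta\le 1$. Your proposed fix --- replacing $\|\na G_\theta(t-s)\|_{L^1}$ by $\|\na G_\theta(t-s)\|_{L^r}$ with $r>1$ via Young's inequality --- goes in the wrong direction: by the self-similar form \eqref{a.self} one has $\na G_\theta(x,\tau)=\tau^{-(d+1)/\theta}(\na G_\theta)(\tau^{-1/\theta}x,1)$, hence
\begin{equation*}
  \|\na G_\theta(\tau)\|_{L^r(\R^d)} = C\,\tau^{-\frac{1}{\theta}-\frac{d}{\theta}\left(1-\frac{1}{r}\right)},
\end{equation*}
which is \emph{more} singular at $\tau=0$ than the $L^1$ norm, not milder. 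Whatever spatial exponents you choose, the time singularity near $s=t$ is at least $(t-s)^{-1/\theta}$, which is non-integrable for $\theta\le 1$; keeping the divergence on the nonlinearity instead would require gradient bounds on $\rho$ that are not available. This is precisely why the paper abandons the mild formulation for $0<\theta\le 1$ and proves the weighted bound by an energy method: it writes an equation for $P=|x|^d\rho$ (respectively $|x|^{d-1}x\rho$ in odd dimensions), with the commutators $[(-\Delta)^{\theta/2},|x|^d]\rho$ and $[|x|^d,\diver](\rho\na\psi)$ as source terms, multiplies by $P^{q-1}$, uses the Stroock--Varopoulos inequality for the dissipative term and the sign of $\int P^q\rho\,\dd{x}$ for the drift, and controls the commutators through Riesz transforms and the Hardy--Littlewood--Sobolev inequality; the hypothesis $q>d/\theta$ is what makes the relevant interpolation exponent $\nu>1$ there. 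Without an argument of this type (or some other replacement for Duhamel), your proof does not cover the case $0<\theta\le 1$.
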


We postpone the (lengthy) proof to Appendix \ref{sec.estrho}.

We claim that
Lemma \ref{lem.estrho} and estimate \eqref{a.asymp} imply that \eqref{2.int} converges.
To see this, we first observe that \eqref{2.int} converges for $p=1$. 
For $p>1$, we fix $t>0$ and split \eqref{2.int} into two parts, $I(t)=I_1+I_2$, where
\begin{align*}
  I_1 &= \int_{B_R(0)}\rho(x,t)^p G_\theta(x,t+1/\theta)^{1-p}\dd{x}, \\
	I_2 &= \int_{\R^d\backslash B_R(0)}\rho(x,t)^p G_\theta(x,t+1/\theta)^{1-p}\dd{x},
\end{align*}
and $B_R(0)$ is the ball of radius $R>0$ centered at the origin. 
Let $K=R(t+1/\theta)^{-1/\theta}$. Consider
$|x|<R=K(t+1/\theta)^{1/\theta}$. 
Then we can apply estimates \eqref{a.asymp} and \eqref{1.p2}:
$$
  I_1 \le C(t+1/\theta)^{\frac{d}{\theta}(p-1)}\int_{\R^d}\rho(x,t)^p \dd{x}
	\le C(t+1/\theta)^{\frac{d}{\theta}(p-1)}t^{-\frac{d}{\theta}(p-1)}\le C.
$$
To estimate $I_2$, we wish to apply Lemma \ref{lem.estrho} 
for large $q>d/\theta$. If $0<\theta<1$, the inequality $q>d/\theta$ shows that 
the assumption of Lemma \ref{lem.estrho} is satisfied. 
If $1<\theta\le 2$, it follows that our assumption 
$\theta<d/2$ (in Theorem~\ref{thm.main})
is equivalent to $d/\theta>d/(d-\theta)$ such that $q>d/(d-\theta)$,
and Lemma \ref{lem.estrho} is applicable also in this case.
Consider $1<p<2$ and $\min\{p,d/\theta\}<q$. 
We employ H\"older's inequality for $r=q/(q-p)$ to find that
\begin{align} 
  I_2 &= \int_{\R^d\backslash B_R(0)} |x|^{dp} \rho(x,t)^p 
	G_\theta(x,t+1/\theta)^{1-p} |x|^{-dp} \dd{x} \nonumber \\
  &\leq \big\||x|^{d} \rho(x,t) \big\|_{L^q(\R^d)}^p 
  \bigg( \int_{\R^d\backslash B_R(0)} G_\theta(x,t+1/\theta)^{(1-p)r} |x|^{-dpr} 
	\dd{x}\bigg)^{1/r}. \label{est.I2}
\end{align}
Using the pointwise estimates \eqref{a.asymp}, the second factor can be estimated as
\begin{align*}
  \int_{\R^d\backslash B_R(0)} G_\theta(x,t+1/\theta)^{(1-p)r} |x|^{-dpr} \dd{x}
   &\leq \int_{\R^d\backslash B_R(0)} \big(C (t+1/\theta) |x|^{-d-\theta} 
	\big)^{(1-p)r} |x|^{-dpr} \dd{x} \\
   &\leq C (t+1/\theta)^{(1-p)r} \int_{\R^d\backslash B_R(0)} 
	|x|^{(d+\theta) (p-1)r -dpr} \dd{x}\\
   &\leq C (t+1/\theta)^{(1-p)r} \int_R^\infty \eta^{(d+\theta) (p-1)r -dpr +d-1} 
	\dd{\eta}.
\end{align*}
The last integral exists if and only if 
$(d+\theta) (p-1)r -dpr +d-1 < -1$ or equivalently $q<dp/((p-1)\theta)$.
Since $dp/((p-1)\theta)\to\infty$ as $p\to 1$, for any large $q>d/\theta$, 
we can choose a sufficiently small $p>1$ such that 
$d/\theta<q< dp/((p-1)\theta)$.
Collecting the estimates starting from \eqref{est.I2} with 
$R=K(t+1/\theta)^{1/\theta}$ and using Lemma \ref{lem.estrho}, we deduce that
\begin{align*}
 I_2
  &\leq C(1+t)^{dp/(\theta q)} (t+1/\theta)^{1-p} |R|^{(d+\theta) (p-1) -dp +d/r} \\
  &\leq C(1+t)^{dp/(\theta q)} (t+1/\theta)^{1-p} 
	(t+1/\theta)^{((d+\theta) (p-1) -dp +d/r)/\theta} \leq C
\end{align*}
is uniformly bounded in time.
Thus, the integral $I(t)$ in~\eqref{2.int} and consequently $E_p[v(t)]$ are well-defined 
for all $t\geq 0$. This finishes the proof of Theorem \ref{thm.main}.

\section{Application to other equations}\label{sec.other}

The entropy method can be applied to other semilinear equations, and in this section,
we give two illustrative examples.

\subsection{Quasi-geostrophic equations}

The two-dimensional quasi-geostrophic equations (for sufficiently smooth solutions)
read as
\begin{equation}\label{4.geo}
  \pa_t\rho + (-\Delta)^{\theta/2}\rho = \diver(\rho\na^\perp\psi),
	\quad (-\Delta)^{1/2}\psi = -\rho\quad\mbox{in }\R^2,\ t>0,
\end{equation}
with the initial condition $\rho(0)=\rho_0\ge 0$ in $\R^2$, $0<\theta\le 2$, 
and $\na^\perp=(-\pa_2,\pa_1)^T$. 
This model approximates the atmospheric and oceanic fluid flow in a 
certain physical regime. The variables $\rho$ and $\psi$ refer to the temperature 
and the stream function, respectively. 
For the geophysical background, we refer to \cite{CMT94}. 
The existence of global solutions in a Sobolev space setting was shown in 
\cite{Ju04}, for instance.
The time decay in the $L^\infty(\R^2)$ norm was investigated in \cite{CoCo04}.
Estimate \eqref{1.p2} can be obtained from standard estimates 
 since the nonlinear part vanishes after multiplication by $\rho^{p-1}$ ($p>1$) 
 and integration over $\R^2$:
$$
  \int_{\R^2}\rho^{p-1}\diver(\rho\na^\perp\psi)\dd{x}
	= (p-1)\int_{\R^2}\rho^{p-1}\na\rho\cdot\na^\perp\psi \dd{x} = 0.
$$
For the analysis of the asymptotic profile,
we use the transformation \eqref{2.defu} 
and $\phi(x,t) := (e^{t}/M) \psi(e^t x,(e^{\theta t}-1)/\theta)$, yielding
\begin{align*}
  & \pa_t u + (-\Delta)^{\theta/2}u = \diver(u\na V) + Me^{-(3-\theta)t}
	\diver(u\na^\perp\phi), \\
	& (-\Delta)^{1/2}\phi = -u \quad\mbox{in }\R^2,\ t>0, \quad u(0)=\rho_0/M,
\end{align*}
where $V(x)=\frac12|x|^2$.
Defining the entropy as in \eqref{2.ent}, a computation similar to the one 
in the proof of Lemma \ref{lem.decayu} gives for $v=u/G_\theta(1/\theta)$:
\begin{align*}
  \frac{\dd{E_p}}{\dd{t}}[v] \le -\theta E_p[v]
	+ pMe^{-(3-\theta)t}\int_{\R^2}v^{p-1} G_\theta(1/\theta)^{1-1/p}u
	\na\big(G_\theta(1/\theta)^{1-1/p}\big)\cdot\na^\perp\phi \dd{x}.
\end{align*}
Estimate \eqref{a.grad} shows that $\na(G_\theta(1/\theta)^{1-1/p})\in L^\infty(\R^2)$.
Moreover,
$$
  \sup_{t>0}\|\na^\perp\phi\|_{L^p(\R^2)} 
	= \sup_{t>0}\|\na^\perp(-\Delta)^{-1/2}u\|_{L^p (\mathbb{R}^2)}
	\le C\sup_{t>0}\|u\|_{L^p(\R^d)}
$$
is finite. 
The regularity of $\rho$ implies that $u\in L^\infty(0,\infty;L^\infty(\R^2))$.
Therefore, using H\"older's and Young's inequalities, we find that
\begin{align*}
  \bigg|\int_{\R^2}&v^{p-1} G_\theta(1/\theta)^{1-1/p}u
	\na\big(G_\theta(1/\theta)^{1-1/p}\big)\cdot\na^\perp\phi \dd{x}\bigg| \\
  &\le \|\na(G_\theta(1/\theta)^{1-1/p})\|_{L^\infty(\R^2)}\|u\|_{L^\infty(\R^2)}
	\|\na^\perp\phi\|_{L^p(\R^2)}\bigg(\int_{\R^2}v^p G_\theta(1/\theta)\dd{x}\bigg)^{1-1/p}
	\\
	&\le C\big(E_p[v]+1\big).
\end{align*}
Thus, the same procedure as in Section \ref{sec.proof} shows that
the solution to \eqref{4.geo} satisfies \eqref{1.decay}.

%%%%%%%%%%%%%%%%%

\subsection{Generalized Burgers equation}

The generalized Burgers equation in one space dimension
\begin{equation}\label{4.burger}
  \pa_t\rho + (-\pa_x^2)^{\theta/2}\rho + \rho\pa_x\rho = 0\quad\mbox{in }\R,\ t>0,
	\quad \rho(0)=\rho_0,
\end{equation}
with $\theta=2$ is a basic model for one-dimensional fluid flows with
interacting nonlinear and dissipative phenomena,
and it describes for $0<\theta<2$ the overdriven detonation in gases \cite{Cla02}
or the anomalous diffusion in semiconductor growth \cite{Woy01}.
The value $\theta=1$ is a
threshold for the occurrence of singularities \cite{DDL09}, and the existence of 
global solutions holds for $1<\theta<2$. 
Global solutions in Besov spaces were also shown to exist \cite{Iwa15},
and in the same paper, the self-similar asymptotics have been analyzed.
The function $u$, defined by \eqref{2.defu}, solves
$$
  \pa_t u + (-\pa_x^2)^{\theta/2}u = \pa_x(u\pa_x V) - Me^{-(2-\theta)t}u\pa_x u,
$$
and the entropy method yields, similar as in the previous subsection,
\begin{align*}
  \frac{\dd{E_p}}{\dd{t}}[v] &\le -\theta E_p[v]
	+ pMe^{-(2-\theta)t}\int_{\R}\pa_x u\, v^p G_\theta(1/\theta)\dd{x} \\
	&\le -\theta E_p[v] + pMe^{-(2-\theta)t}\|\pa_x u\|_{L^\infty(\R)}(E_p[v]+1).
\end{align*}
Assuming that the solution to \eqref{4.burger} satisfies the bound
$\|\pa_x\rho(t)\|_{L^\infty(\R)}\le Ct^{-2/\theta}$, it follows that
$\pa_x u\in L^\infty(0,\infty;L^\infty(\R))$, and we obtain the decay estimate
\eqref{1.decay}, where a restriction for the range of $\theta$ is expected
from estimating the nonlinear terms. 

Observe that the entropy method does not work in the case $\theta=2$
(see the discussion at the end of the introduction in \cite{Iwa15}).
Moreover, when $0<\theta<1$, there are initial data such that the solution
develops gradient blowup in finite time \cite{DDL09}.

%%%%%%%%%%%%%%%%%%%%%%%%%%%%%%%%%%%%%%%%%%%%%%%%%%%%%%%%%%%%%%%%%%%%%%%%%%%%%

\begin{appendix}

\section{Proof of Lemma \ref{lem.estrho}}\label{sec.estrho}

For later reference, we note that,
by the Poisson representation formula 
and property \eqref{1.p2} with $p=1$ and $p=\infty$,
\begin{align}
  |\na\psi(x,t)| &= |\na(-\Delta)^{-1}\rho(x,t)|
	= c_d\bigg|\int_{\R^d}\frac{x-y}{|x-y|^d}\rho(y,t)\dd{y}\bigg| \nonumber \\
	&\le c_d\int_{\{|x-y|\le (1+t)^{1/\theta}\}}\frac{|\rho(y,t)|}{|x-y|^{d-1}}\dd{y}
	+ c_d\int_{\{|x-y| > (1+t)^{1/\theta}\}}\frac{|\rho(y,t)|}{|x-y|^{d-1}}\dd{y} 
	\nonumber \\
  &\le C(1+t)^{1/\theta}\|\rho(t)\|_{L^\infty(\R^d)}
	+ C(1+t)^{-(d-1)/\theta}\|\rho(t)\|_{L^1(\R^d)} \nonumber \\
	&\le C(1+t)^{-d/\theta+1/\theta}. \label{3.psi}
\end{align}

{\em Step 1: $1<\theta\le 2$.}
The solution to \eqref{1.eq} can be formulated as the mild solution
$$
  \rho(t) = G_\theta(t)*\rho_0 + \int_0^t\na G_\theta(t-s)*(\rho\na(-\Delta)^{-1}\rho)
	(s)\dd{s}.
$$
We multiply this equation by $|x|^d$,
\begin{align*}
  \big||x|^d\rho(t)\big|
	&\le \big||x|^dG_\theta(t)*\rho_0\big| \\
	&\phantom{xx}{}+ C\int_0^t\int_{\R^d}\big(|x-y|^d+|y|^d\big)|\na G_\theta(t-s,x-y)|
	|(\rho\na(-\Delta)^{-1}\rho)(s,y)|\dd{y}\dd{s},
\end{align*}
and take the $L^q(\R^d)$ norm:
\begin{align}
  &\big\||x|^d\rho(t)\big\|_{L^q(\R^d)} 
	\le \big\||x|^d G_\theta(t)\big\|_{L^q(\R^d)}\|\rho_0\|_{L^1(\R^d)}
	+ \|G_\theta (t)\|_{L^1 (\R^d)}\big\| |x|^d \rho_0\big\|_{L^q (\R^d)} \nonumber \\
	&\phantom{xx}{}+ C\int_0^t\big\||x|^d\na G_\theta(t-s)\big\|_{L^q(\R^d)}
	\|\rho(s)\|_{L^1(\R^d)}\big\|(\na(-\Delta)^{-1}\rho)(s)\big\|_{L^\infty(\R^d)} \dd{s} 
	\nonumber \\
	&\phantom{xx}{}+ C\int_0^t\|\na G_\theta(t-s)\|_{L^1(\R^d)}
	\big\||x|^d\rho(s)\big\|_{L^q(\R^d)}
	\big\|(\na(-\Delta)^{-1}\rho)(s)\big\|_{L^\infty(\R^d)} \dd{s} \nonumber \\
	&=: I_1 + I_2 + I_3. \label{3.I123}
\end{align}
We estimate term by term.

Estimates \eqref{a.asymp} show that $\||x|^dG_\theta(t)\|_{L^\infty(\R^d)}\le C$
and $\||x|^dG_\theta(t)\|_{L^q(\R^d)}\le C(1+t)^{d/(q\theta)}$ for $q>d/\theta$, 
and $\| G_\theta (t) \|_{L^1 (\R^d)} = 1$.
We deduce that
$$
  I_1 \le C(1+t)^{d/(q\theta)}.
$$
For the estimate of $I_2$, we first infer from 
the pointwise estimates \eqref{a.grad} that
$$
  \big\||x|^d\na G_\theta(t)\big\|_{L^q(\R^d)}
	\le C \big\|\min\{t^{-1/\theta}, |x|^{-1}\}\big\|_{L^\infty(\R^d)}
	\big\||x|^d G_\theta(t)\big\|_{L^q(\R^d)}
  \le Ct^{-\frac{1}{\theta}+\frac{d}{q\theta}}.
$$
Therefore, taking into account \eqref{3.psi} and the inequality $\theta\le d$,
$$
  I_2 \le C\int_0^t(t-s)^{-1/\theta+d/(q\theta)}(1+s)^{-d/\theta+1/\theta}\dd{s}
	\le C(1+t)^{d/(q\theta) - d/\theta + 1} \le C(1+t)^{d/(q\theta)}.
$$
Note that $\theta>1$ implies that $-1/\theta+d/(q\theta)>-1$ and so, 
$(t-s)^{-1/\theta+d/(q\theta)}$ is integrable in $(0,t)$.

For the integral $I_3$, we first observe that,
again by \eqref{a.asymp} and \eqref{a.grad}, 
$\|\na G_\theta(t-s)\|_{L^1(\R^d)} \le C(t-s)^{-1/\theta}$. Hence, with \eqref{3.psi},
\begin{align*}
  I_3 &\le C\int_0^t(t-s)^{-1/\theta}\big\||x|^d\rho(s)\big\|_{L^q(\R^d)}
	(1+s)^{-d/\theta+1/\theta}\dd{s} \\
	&\le C\sup_{0<s<t}\Big((1+s)^{\frac{d}{\theta}(1-\frac{1}{q})-\frac{d}{\theta}}
	\big\||x|^d\rho(s)\big\|_{L^q(\R^d)}\Big)\int_0^t(t-s)^{-1/\theta}
	(1+s)^{-\frac{d}{\theta}(1-\frac{1}{q})+\frac{1}{\theta}}\dd{s} \\
	&\le C\sup_{0<s<t}\Big((1+s)^{\frac{d}{\theta}(1-\frac{1}{q})-\frac{d}{\theta}}
	\big\||x|^d\rho(s)\big\|_{L^q(\R^d)}\Big)\tau(t),
\end{align*}
where $\tau(t)=(1+t)^{-\frac{d}{\theta}(1-\frac{1}{q})+1}$. 
Note that again, $\theta>1$ is needed to ensure that $(t-s)^{-1/\theta}$ 
is integrable in $(0,t)$.
Since $q>d/(d-\theta)$, it follows that $\tau(t)\to 0$ as
$t\to\infty$. 

We conclude from \eqref{3.I123} that
$$
  \big\||x|^d\rho(t)\big\|_{L^q(\R^d)}
	\le C(1+t)^{d/(q\theta)} + C\tau(t)\sup_{0<s<t}\Big((1+s)^{-d/(q\theta)}
	\big\||x|^d\rho(s)\big\|_{L^q(\R^d)}\Big).
$$
This can be written as
\begin{align*}
  \sup_{0<s<t}&\Big((1+s)^{-d/(q\theta)}
	\big\||x|^d\rho(s)\big\|_{L^q(\R^d)}\Big) \\
	&\le C + (1+t)^{-d/(q\theta)}\tau(t)\sup_{0<s<t}\Big((1+s)^{-d/(q\theta)}
	\big\||x|^d\rho(s)\big\|_{L^q(\R^d)}\Big).
\end{align*}
Choosing $t>0$ sufficiently large, we infer that 
$$
  \sup_{0<s<t}\Big((1+s)^{-d/(q\theta)}
	\big\||x|^d\rho(s)\big\|_{L^q(\R^d)}\Big) \le C,
$$
proving the claim.

{\em Step 2: $0<\theta\le 1$.}
We first study the even dimensional case, $d=2m$ for some $m\in\N$. The function
$P(x,t)=|x|^d\rho(x,t)$ solves
\begin{equation}\label{3.P}
  \pa_t P + (-\Delta)^{\theta/2}P - \diver(P\na\psi)
	= [(-\Delta)^{\theta/2},|x|^d]\rho + [|x|^d,\diver](\rho\na\psi),
\end{equation}
where $[A,B]=AB-BA$ is the commutator. The first term on the right-hand side becomes
$$
  [(-\Delta)^{\theta/2},|x|^d]\rho 
	= \F^{-1}\big[[|\xi|^\theta,(-\Delta)^m]\widehat\rho\big].
$$
The highest order derivative of $\widehat\rho$ in the commutator cancels, since
$$
  [|\xi|^\theta,(-\Delta)^m]\widehat\rho
	= |\xi|^\theta(-\Delta)^m\widehat\rho - (-\Delta)^m(|\xi|^\theta\widehat\rho),
$$
and the chain rule gives the sum of derivatives up to order $2m-1=d-1$.
Hence, we infer from the Fourier convolution formula that
\begin{align*}
  \big[(-\Delta)^{\theta/2},|x|^d\big]\rho 
	&= \sum_{|\alpha|+|\beta|=d,\,|\beta|\le d-1}a_{\alpha,\beta}
	\F^{-1}\big[(i\na)^\alpha(|\xi|^\theta)(i\na)^\beta\widehat\rho\big] \\
	&= \sum_{|\alpha|+|\beta|=d,\,|\beta|\le d-1}a_{\alpha,\beta}
	\F^{-1}\big[(i\na)^\alpha(|\xi|^\theta)\big]*(x^\beta\rho),
\end{align*}
where $\alpha$, $\beta\in\N^d_0$ are multi-indices,
$a_{\alpha,\beta}\in\R$ are some constants, and $(i\na)^\beta$ and $x^\beta$
are to be understood as products of the components of the corresponding vectors.
Evaluating the commutator, the second term on the right-hand side of \eqref{3.P} equals
$$
  [|x|^d,\diver](\rho\na\psi) = |x|^d\diver(\rho\na\psi) - \diver(|x|^d\rho\na\psi)
	= -d|x|^{d-2}\rho x\cdot\na\psi.
$$
Hence, multiplying \eqref{3.P} by $P^{q-1}$ with $q>d/\theta>1$, we have
\begin{align}
  \frac{1}{q}&\frac{\dd{}}{\dd{t}}\int_{\R^d}P^q \dd{x} 
	+ \int_{\R^d}P^{q-1}(-\Delta)^{\theta/2}P\dd{x}
	+ \frac{q-1}{q}\int_{\R^d}\na\psi\cdot\na P^q \dd{x} \nonumber \\
	&= \sum_{|\alpha|+|\beta|=d,\,|\beta|\le d-1}a_{\alpha,\beta} \int_{\R^d}P^{q-1}
	\F^{-1}\big[(i\na)^\alpha(|\xi|^\theta)\big]*(x^\beta\rho)\dd{x} \label{3.aux} \\
	&\phantom{xx}{}- d\int_{\R^d} P^{q-1}|x|^{d-2}\rho x\cdot\na\psi \dd{x}. \nonumber
\end{align}
The Stroock-Varopoulos inequality (see, e.g., \cite[Prop.~2.1]{OgYa09},
\cite[Lemma~1]{CoCo03} or \cite[Theorem 2.1]{LiSe96}) 
gives for the second term on the left-hand side:
$$
  \int_{\R^d}P^{q-1}(-\Delta)^{\theta/2}P\dd{x}
	\ge \frac{2}{q}\int_{\R^d}\big|(-\Delta)^{\theta/4}(P^{q/2})\big|^2 \dd{x}.
$$
Furthermore, integrating by parts and using the Poisson equation, the third term on the 
left-hand side of \eqref{3.aux} becomes
$$
  \frac{q-1}{q}\int_{\R^d}\na\psi\cdot\na P^q \dd{x} 
	= \frac{q-1}{q}\int_{\R^d}P^q\rho \dd{x} \ge 0.
$$
Thus, multiplying \eqref{3.aux} by $q(1+t)^{-\gamma q}$ for some 
$0<\gamma< d/(\theta q)$ and integrating over time,
\begin{align}
  (1&+t)^{-\gamma q}\|P(t)\|_{L^q(\R^d)}^q 
	+ \gamma q\int_0^t(1+s)^{-\gamma q-1}\|P(s)\|_{L^q(\R^d)}^q \dd{s} \nonumber \\
	&\phantom{xx}{}+ 2\int_0^t(1+s)^{-\gamma q}
	\|(-\Delta)^{\theta/4}(P(s)^{q/2})\|_{L^2(\R^d)}^2  \dd{s} \nonumber \\
	&\phantom{xx}{}+ (q-1)\int_0^t(1+s)^{-\gamma q}\int_{\R^d}P(s)^q \rho(s)\dd{x}\dd{s} 
	\nonumber \\
	&\le q\sum_{|\alpha|+|\beta|=d,\,|\beta|\le d-1}a_{\alpha,\beta} 
	\int_0^t(1+s)^{-\gamma q}
	\int_{\R^d}P(s)^{q-1}\F^{-1}\big[(i\na)^\alpha(|\xi|^\theta)\big]
	*(x^\beta\rho(s))\dd{x}\dd{s} \nonumber \\
	&\phantom{xx}{}- d\int_0^t(1+s)^{-\gamma q}\int_{\R^d} P(s)^{q-1}|x|^{d-2}
	\rho(s) x\cdot\na\psi(s) \dd{x}\dd{s} + \||x|^d\rho_0\|_{L^q(\R^d)}^q. \label{3.aux2}
\end{align}
The third and fourth terms on the left-hand side will be neglected, and we
need to estimate only the terms on the right-hand side.

Let us consider the first term on the right-hand side.
The H\"older inequality with $\eta=q/(q-1)>1$ and $\eta'=q$ implies that
$$
  \bigg|\int_{\R^d}P^{q-1}\F^{-1}\big[(i\na)^\alpha(|\xi|^\theta)\big]
	*(x^\beta\rho)\dd{x}\dd{s} \bigg|
	\le \|P\|_{L^q(\R^d)}^{q-1}\|\F^{-1}\big[(i\na)^\alpha(|\xi|^\theta)\big]
	*(x^\beta\rho)\|_{L^q(\R^d)}.
$$
Since $(i\na)^\alpha|\xi|^\theta=|\xi|^{\theta-2|\alpha|} P_{|\alpha|} (\xi)$
for some homogeneous polynomial $P_{|\alpha|}$ of $\xi$ with order $|\alpha|$, we see that
$\F^{-1}[(i\na)^\alpha|\xi|^\theta]*(x^\beta\rho) 
=|\na|^{\theta-2|\alpha|} P_{|\alpha|} (i\nabla) (x^\beta\rho)$.
The operator $|\na|^{-|\alpha|} P_{|\alpha|}(i\na)$ is bounded in
$L^q(\R^d)$ since it is a polynomial of Riesz transforms.
Hence, we can apply the Hardy-Littlewood-Sobolev inequality 
\cite[Section V.1.1, Theorem 1]{Ste70} with $1/r = 1/q + (|\alpha|-\theta)/d$ 
to obtain
$$
  \| |\na|^{\theta-2|\alpha|} P_{|\alpha|} (i\nabla) (x^\beta\rho)\|_{L^q(\R^d)}
  \le  C\| |\na|^{\theta - |\alpha|} (x^\beta \rho) \|_{L^q (\mathbb{R}^d)}
   \le C\|x^\beta\rho\|_{L^r(\R^d)}.
$$
We infer that
$$
  \bigg|\int_{\R^d}P^{q-1}\F^{-1}\big[(i\na)^\alpha(|\xi|^\theta)\big]
	*(x^\beta\rho)\dd{x}\dd{s} \bigg|
	\le C\|P\|_{L^q(\R^d)}^{q-1}\|x^\beta\rho\|_{L^r(\R^d)}.
$$

We proceed by applying the H\"older inequality with $\eta=dq/(|\beta|r)>1$
and $\eta'=dq/(dq-|\beta|r)$ to the last norm:
$$
  \big\||x|^{|\beta|}\rho\|_{L^r(\R^d)}^r
	= \int_{\R^d}|x|^{|\beta|r}\rho^{|\beta|r/d}\cdot\rho^{r(1-|\beta|/d)}\dd{x}
	\le \big\||x|^d\rho\big\|_{L^q(\R^d)}^{|\beta|r/d}
	\|\rho\|_{L^\nu(\R^d)}^{r(1-|\beta|/d)},
$$
where $\nu=(d-|\beta|)qr/(dq-|\beta|r)$. Note that $\nu>1$. Indeed, inserting 
$1/r=1/q+(|\alpha|-\theta)/d$ and $|\alpha|+|\beta|=d$ yields
$$
  \nu = \frac{d-|\beta|}{d/r - |\beta|/q}
	= \frac{d-|\beta|}{d/q+|\alpha|-\theta-|\beta|/q}
	= \frac{|\alpha|}{|\alpha|/q+|\alpha|-\theta},
$$
and this is larger than one if and only if $|\alpha|/q<\theta$. This is true
since $|\alpha|/q\le d/q<\theta$ by the choice of $q$.
Then, splitting the integrand,
\begin{align*}
  \bigg|\int_0^t&(1+s)^{-\gamma q}
	\int_{\R^d}P(s)^{q-1}\F^{-1}\big[(i\na)^\alpha(|\mu|^\theta)\big]
	*(x^\beta\rho(s))\dd{x}\dd{s}\bigg| \\
	&\le C\int_0^t(1+s)^{-\gamma q}\|P\|_{L^q(\R^d)}^{|\beta|/d+q-1}
	\|\rho\|_{L^\nu(\R^d)}^{1-|\beta|/d}\dd{s} \\
	&\le C\int_0^t\big((1+s)^{-\gamma q + \mu - 1}\big)^{1/\mu}
	\|\rho\|_{L^\nu(\R^d)}^{1-|\beta|/d}\big((1+s)^{-\gamma q-1}\big)^{1-1/\mu}
	\|P\|_{L^q(\R^d)}^{|\beta|/d+q-1}\dd{s} \\
	&= C\int_0^t\Big((1+s)^{-\gamma q + \mu - 1}
	\|\rho\|_{L^\nu(\R^d)}^{q}\Big)^{1/\mu}
	\Big((1+s)^{-\gamma q-1}\|P\|_{L^q(\R^d)}^{q}\Big)^{(\mu-1)/\mu}\dd{s}.
\end{align*}
Young's inequality with $\eta=\mu>1$ and $\eta'=\mu/(\mu-1)$ and any $\eps>0$ yields
\begin{align}
  \bigg|\int_0^t&(1+s)^{-\gamma q}
	\int_{\R^d}P(s)^{q-1}\F^{-1}\big[(i\na)^\alpha(|\xi|^\theta)\big]
	*(x^\beta\rho(s))\dd{x}\dd{s}\bigg| \nonumber \\
	&\le C_\eps \int_0^t(1+s)^{-\gamma q + \mu - 1}\|\rho\|_{L^\nu(\R^d)}^{q}\dd{s}
	+ \eps\int_0^t(1+s)^{-\gamma q-1}\|P\|_{L^q(\R^d)}^{q}\dd{s}. \label{3.aux3}
\end{align}
For sufficiently small $\eps>0$, the last term is absorbed by the second term
on the left-hand side of \eqref{3.aux2}. 
In view of \eqref{1.p2}, the first term on the right-hand side of \eqref{3.aux3} is 
estimated according to
$$
  \int_0^t(1+s)^{-\gamma q + \mu - 1}\|\rho\|_{L^\nu(\R^d)}^{q}\dd{s}
	\le C(1+t)^{-\gamma q + \mu - q\frac{d}{\theta}(1-\frac{1}{\nu})}.
$$
Using the definitions of $\mu$, $\nu$, and $1/r=1/q+(|\alpha|-\theta)/d$ 
as well the property $|\alpha|+|\beta|=d$, it follows that
\begin{align*}
  \mu - q\frac{d}{\theta}\bigg(1-\frac{1}{\nu}\bigg)
	&= \frac{dq}{d-|\beta|} - \frac{dq}{\theta}
	\bigg(1 - \frac{dq-|\beta|r}{(d-|\beta|)qr}\bigg) \\
	&= \frac{dq}{d-|\beta|}\bigg(1 - \frac{d-|\beta|}{\theta} + \frac{d}{\theta r}
	- \frac{|\beta|}{q\theta}\bigg) \\
	&= \frac{dq}{d-|\beta|}\bigg(1 - \frac{d}{\theta} + \frac{|\beta|}{\theta}
	+ \frac{d}{q\theta} + \frac{|\alpha|-\theta}{\theta} - \frac{|\beta|}{q\theta}\bigg) \\
	&= \frac{dq}{d-|\beta|}\bigg(\frac{d}{q\theta} - \frac{|\beta|}{q\theta}\bigg)
	= \frac{d}{\theta}.
\end{align*}
Therefore,
$$
  \int_0^t(1+s)^{-\gamma q + \mu - 1}\|\rho\|_{L^\nu(\R^d)}^{q}\dd{s}
	\le C(1+t)^{-\gamma q+d/\theta}.
$$

It remains to bound the second term on the right-hand side of \eqref{3.aux2}. 
We apply H\"older's inequality with $\eta=q/(q-1)>1$ and $\eta'=q$ to obtain
\begin{equation}\label{3.aux4}
  \bigg|\int_{\R^d}P^{q-1}|x|^{d-1}\rho|\na\psi|\dd{x}\bigg|
	\le \|P\|_{L^q(\R^d)}^{q-1}\||x|^{d-1}\rho\|_{L^q(\R^d)}\|\na\psi\|_{L^\infty(\R^d)}.
\end{equation}
The second factor is estimated using H\"older's inequality again with
$\eta=d/(d-1)>1$, $\eta'=1/(1-1/\eta)=d$ and using \eqref{1.p2}:
\begin{align*}
  \||x|^{d-1}\rho\|_{L^q(\R^d)}^q 
	&= \int_{\R^d}|x|^{(d-1)q}\rho^{q/\eta}\cdot\rho^{q(1-1/\eta)} \dd{x} 
	\le \bigg(\int_{\R^d}|x|^{dq}\rho^q \dd{x}\bigg)^{1/\eta}
  \bigg(\int_{\R^d}\rho^{q}\dd{x}\bigg)^{1-1/\eta} \\
	&= \||x|^d\rho\|_{L^q(\R^d)}^{q(1-\frac{1}{d})}\|\rho\|_{L^q(\R^d)}^{q/d}
	\le C\|P\|_{L^q(\R^d)}^{q(1-\frac{1}{d})}(1+s)^{-q/\theta+1/\theta}.
\end{align*}
We conclude from \eqref{3.psi} and \eqref{3.aux4} that
\begin{align*}
  \bigg|\int_{\R^d}P^{q-1}|x|^{d-1}\rho|\na\psi|\dd{x}\bigg|
	\le C(1+s)^{-d/\theta+1/(q\theta)}\|P\|_{L^q(\R^d)}^{q-1/d}.
\end{align*}
Thus, using the Young inequality with $\eta=dq$, $\eta'=dq/(dq-1)$
(we only need that $\eta>0$), for any $\eps>0$, the second term on the
right-hand side of \eqref{3.aux2} becomes
\begin{align*}
  \bigg|\int_0^t&(1+s)^{-\gamma q}\int_{\R^d} P(s)^{q-1}|x|^{d-2}
	\rho x\cdot\na\psi(s) \dd{x}\dd{s}\bigg| \\
	&\le C\int_0^t(1+s)^{-\gamma q - d/\theta + 1/(q\theta)}
	\|P\|_{L^q(\R^d)}^{q-1/d}\dd{s} \\
	&= C\int_0^t(1+s)^{1 + 1/(q\theta) - 1/(dq) - d/\theta - \gamma/d}
	\Big((1+s)^{-\gamma q-1}\|P\|_{L^q(\R^d)}^q\Big)^{1-1/(dq)}\dd{s} \\
	&\le C_\eps\int_0^t(1+s)^{dq + d/\theta - 1 - d^2q/\theta - \gamma q}\dd{s} 
	+ \eps\int_0^t(1+s)^{-\gamma q-1}\|P\|_{L^q(\R^d)}^q \dd{s}. \\
	&\le C(1+t)^{-\gamma q+d/\theta} 
	+ \eps\int_0^t(1+s)^{-\gamma q-1}\|P\|_{L^q(\R^d)}^q \dd{s},
\end{align*}
where we used $dq-d^2q/\theta\le 0$ which is equivalent to $\theta\le d$,
and this property has been assumed.
The last expression can be absorbed by the second term on the left-hand side
of \eqref{3.aux2} for sufficiently small $\eps>0$.

Summarizing the estimates, we conclude from \eqref{3.aux2} (for sufficiently
small $\eps>0$) that
$$
  (1+t)^{-\gamma q}\|P(t)\|_{L^q(\R^d)}^q \le C(1+t)^{-\gamma q+d/\theta},
$$
which gives the desired result.

When the dimension is odd, $d=2m+1$ for some $n\in\N_0$, we choose
$P(x,t)=|x|^{d-1}x\rho(x,t)$ and proceed as above. This concludes the proof.

%%%%%%%%%%%%%%%%%%%%%%%

\section{Formal derivation of the fractional drift-diffusion-Poisson system}
\label{sec.deriv}

Fractional diffusion may be derived from linear kinetic transport models
by using the Fourier-Laplace transform \cite{MMM11} or by exploiting the
harmonic extension definition of the fractional diffusion operator \cite{BGM17}.
An alternative is Mellet's moment method \cite{Mel10}, which was used to derive the
first equation in \eqref{1.eq} with fixed force field $E=\na\psi$ 
\cite{AcMe17}. 
In this section, we sketch a formal derivation of the 
drift-diffusion-Poisson system following \cite{AcMe17} to explain
the origin of equations \eqref{1.eq}. 

The starting point is the scaled Boltzmann equation
\begin{equation}\label{a.BE}
  \eps^{\theta-1}\pa_t f_\eps + v\cdot\na_x f_\eps 
	+ \eps^{\theta-2}\na_x\psi_\eps \cdot\na_v
	f_\eps = \eps^{-1}Q(f_\eps), 
\end{equation}
for the distribution function $f_\eps(x,v,t)$, where $x\in\R^d$ is the
spatial variable, $v\in\R^d$ is the velocity, and $t>0$ is the time.
We prescribe the initial condition $f(x,v,0)=f_I(x,v)$ for $(x,v)\in\R^d\times\R^d$.
The electric potential $\psi_\eps(x,t)$ is governed by the Poisson
equation
\begin{equation}\label{a.poi}
  -\Delta_x\psi_\eps = \rho_\eps, \quad \rho_\eps := \int_{\R^d}f_\eps \dd{v},
\end{equation}
where $\rho_\eps$ is the particle density, and $Q$ is the linear Boltzmann operator
$$
  Q(f) = \int_{\R^d}\big(\sigma(v,v')M(v)f(v') - \sigma(v',v)M(v')f(v)\big)\dd{v'},
$$
where $\sigma(v,v')>0$ is the scattering rate. The parameter $\eps>0$ measures
the collision frequency, and $\theta>0$ fixes the relation between
diffusion, due to scattering, and advection, due to the acceleration field.

System \eqref{a.BE}-\eqref{a.poi} models the evolution of 
particles in a dilute gas subject to a self-consistent acceleration field 
$E_\eps=\na_x\psi_\eps$. For instance, $\rho_\eps$ may describe the density of electrons
in a semiconductor, and $\psi_\eps$ is the electric potential. The function $M(v)$
models the thermodynamic equilibrium. In semiconductor theory, it is typically
given by the Maxwellian, and in this case $\theta=2$ \cite{Jue09}. Here, we are
interested in the case $1<\theta<2$. The main assumptions on the equilibrium
$M$ are that it is positive, normalized, even, and heavy-tailed, i.e.
$$
  M(v)\sim \frac{\gamma}{|x|^{d+\theta}} \quad\mbox{as }|x|\to\infty
$$
for some $\gamma>0$ and $1<\theta<2$.

Equations \eqref{1.eq} are derived in the limit $\eps\to 0$. To this end, we
expand $f_\eps = F_\eps + \eps^{\theta/2}r_\eps$, where $r_\eps$ is a remainder term.
In contrast to standard kinetic theory, $F_\eps$ is not 
given by the equilibrium distribution $M(v)$ (up to a factor)
but it is the unique solution to 
\begin{equation}\label{a.Feps}
  \eps^{\theta-1}E_\eps\cdot\na_{v} F_\eps 
	= Q(F_\eps), \quad \int_{\R^d}F_\eps \dd{v} = 1.
\end{equation}
We can interpret this equation in two ways. First, because of $\theta>1$,
\eqref{a.Feps} converges formally to $Q(F_0)=0$ as $\eps\to 0$ 
with $F_0=\lim_{\eps\to 0}F_\eps$, such that \eqref{a.Feps} is an approximation 
of the equilibrium condition $Q(F_0)=0$.
Second, we can write $F_\eps(x,v,t)=F(v,\eps^{\theta-1}E_\eps(x,t))$, where $F$ is the
unique solution to $E_\eps\cdot\na_vF=Q(F)$,
$\int_{\R^d}F\dd{v}=1$, and this transformation eliminates the factor in $\eps$. 
This equation appears in the high-field limit, first studied 
in kinetic theory for semiconductors by Poupaud \cite{Pou92}. 

Inserting the expansion $f_\eps = F_\eps + \eps^{\theta/2}r_\eps$ into 
the Boltzmann equation \eqref{a.BE},
dividing the resulting equation by $\eps^{\theta/2-1}$,
and observing that $F_\eps$ solves \eqref{a.Feps}, we find that
\begin{equation}\label{a.feps}
  \eps^{\theta/2}\pa_t f_\eps
	+ \eps^{1-\theta/2} v\cdot\na_x (F_\eps+\eps^{\theta/2}r_\eps)
	= -\eps^{\theta-1}E_\eps\cdot\na_x r_\eps + Q(r_\eps).
\end{equation}
Formally, the left-hand side converges to zero as $\eps\to 0$ (since $\theta<2$).
Therefore $R_\eps:=-\eps^{\theta-1}E_\eps\cdot\na_x r_\eps + Q(r_\eps)\to 0$.
In fact, it can be even proven that $\eps^{-\theta/2}R_\eps\to 0$; this is contained
in \cite[Prop.~4.1]{AcMe17}.
We assume that $f_\eps\to f$ as $\eps\to 0$. Hence $\rho_\eps\to \rho=\int_{\R^d}f\dd{v}$.
In view of the Poisson equation,
this implies that $E_\eps\to E$, where $E=\na_x\psi$ and $-\Delta_x\psi=\rho$.

Next, following \cite{AcMe17}, we multiply \eqref{a.feps} by some test function $\chi$
and integrate over $S=\R^d\times\R^d\times(0,T)$:
\begin{equation}\label{a.feps2}
  \iiint_S\pa_t f_\eps\chi \dd{x}\dd{v}\dd{t}
	+ \eps^{1-\theta}\iiint_S 
	v\cdot\na_x(F_\eps+\eps^{\theta/2}r_\eps)\chi \dd{x}\dd{v}\dd{t}
	= \eps^{-\theta/2}\iiint_S R_\eps\chi \dd{x}\dd{v}\dd{t}.
\end{equation}
The key idea is to choose $\chi=\chi_\eps$ as the unique solution to 
$$
  \eps v\cdot\na_x\chi_\eps = \nu(v)(\chi_\eps-\phi) \quad\mbox{in }\R^d,
$$
where $\phi(x,t)$ is some test function and $\nu(v)=\int_{\R^d}\sigma(v',v)M(v')\dd{v'}$.
Formally, $\chi_\eps\to\delta_v\phi$ as $\eps\to 0$, where $\delta_v$ is the Dirac
delta distribution in the velocity space. Therefore, for the first term in
\eqref{a.feps2},
$$
  \iiint_S\pa_t f_\eps\chi_\eps \dd{x}\dd{v}\dd{t} 
	\to \int_0^T\int_{\R^d}\pa_t\rho\phi \dd{x}\dd{t}.
$$
The right-hand side of \eqref{a.feps2} converges to zero. It remains to treat the
second term in \eqref{a.feps2}. Integrating by parts and using the equation
for $\chi_\eps$, we obtain
\begin{align*}
  \eps^{1-\theta}&\iiint_S 
	v\cdot\na_x(F_\eps+\eps^{\theta/2}r_\eps)\chi_\eps \dd{x}\dd{v}\dd{t} \\
	&= -\eps^{1-\theta}\iiint_S v\cdot\na_x\chi_\eps F_\eps \dd{x}\dd{v}\dd{t} 
	- \eps^{1-\theta/2}\iiint_S v\cdot\na_x\chi_\eps r_\eps \dd{x}\dd{v}\dd{t} \\
	&= -\eps^{-\theta}\iiint_S \nu F_\eps(\chi_\eps-\phi)\dd{x}\dd{v}\dd{t}
	- \eps^{1-\theta/2}\iiint_S v\cdot\na_x\chi_\eps r_\eps \dd{x}\dd{v}\dd{t}.
\end{align*}
Since $1-\theta/2>0$, the last term converges (formally) to zero. 
By \cite[Prop.~4.2]{AcMe17} and $E_\eps\to E$, also the first term converges:
\begin{align*}
  -\eps^{-\theta}\iiint_S & \nu(v)F(v,\eps^{\theta-1}E_\eps(x,t))
	(\chi_\eps-\phi) \dd{x}\dd{v}\dd{t} \\
	&\to \int_0^T\int_{\R^d}\big(\kappa(-\Delta)^{\theta/2}\phi 
	+ (DE)\cdot\na_x\phi\big)\rho \dd{x}\dd{t},
\end{align*}
where $\kappa>0$ depends only on $d$, $\gamma$, $\theta$, and the behavior 
of $\nu(v)$ as $|v|\to\infty$, and 
$$
  D = \int_{\R^d}\lambda(v)\otimes v\dd{v}, \quad\mbox{where }Q(\lambda)=\na_vM.
$$

Therefore, integrating by parts, the limit $\eps\to 0$ in \eqref{a.feps2} leads to
$$
  \int_0^T\int_{\R^d} \pa_t\rho\phi \dd{x}\dd{t}
	= \int_0^T\int_{\R^d}\big(-\kappa(-\Delta)^{\theta/2}\rho  
	+ \diver(DE\rho)\big)\phi \dd{x}\dd{t},
$$
which is the weak formulation of the first equation in \eqref{1.eq} since $E=\na_x\psi$.

Finally, we remark that when the scattering rate is constant, $\sigma(v,v')=1$,
the collision operator simplifies to $Q(f)=M\int_{\R^d}f\dd{v}-f$, and the unique
solution to $Q(\lambda)=\na_vM$, $\int_{\R^d}\lambda \dd{v}=0$ equals $\lambda=-\na_vM$.
Consequently, $D_{ij}=0$ for $i\neq j$ and $D_{ii}=-\int_{\R^d}\pa_{v_i}Mv_i\dd{v}
=\int_{\R^d}M\dd{v}=1$, so $D$ equals the unit matrix.

%%%%%%%%%%%%%%%%%%%%%%%

\section{General drift terms}\label{sec.drift}

The drift-diffusion equation of \cite{AcMe17} is of the form (see Appendix 
\ref{sec.deriv})
\begin{equation}\label{a.eq}
  \pa_t\rho + \kappa(-\Delta)^{\theta/2}\rho = \diver(DE\rho)\quad\mbox{in }\R^d,
\end{equation}
where $\kappa>0$ and $D\in\R^{d\times d}$ is a (constant) matrix.
We suppose that $E=\na\psi$ and the potential $\psi$ is a solution to the Poisson
equation $-\Delta\psi=\rho$. In this section, we will illustrate that our
method can be applied only when $D=aI+B$, where $a>0$ and $B$ is skew-symmetric.

This restriction appears when deriving $L^q(\R^d)$ estimates for $\rho(t)$.
Indeed, multiplying \eqref{a.eq} by $\rho^{q-1}$ for some $q>1$ and using
the Stroock-Varopoulos inequality as in Appendix \ref{sec.estrho}, we obtain
\begin{align}
  \frac{1}{q}\frac{\dd{}}{\dd{t}}\|\rho(t)\|_{L^q(\R^d)}^q
	+ \frac{2\kappa}{q}\|(-\Delta)^{\theta/4}(\rho^{q/2})\|_{L^2(\R^d)}^2
	&\leq -\frac{q-1}{q}\int_{\R^d}\na(\rho^{q})\cdot(D\na\psi) \dd{x} \nonumber \\
	&= \frac{q-1}{q}\int_{\R^d}\rho^{q}\diver(D\na\psi)\dd{x}. \label{a.q}
\end{align}
If $D$ equals the unit matrix, the last integral has a sign:
$$
  \int_{\R^d}\rho^{q}\diver(D\na\psi)\dd{x} = \int_{\R^d}\rho^{q}\Delta\psi \dd{x}
	= -\int_{\R^d}\rho^{q+1} \dd{x} \le 0.
$$
For general matrices $D$, we argue as follows. 
Let $R=\na(-\Delta)^{-1/2}$ be the Riesz transform, which can be also
characterized as a Fourier multiplier, $Rf = \F^{-1}[-i(\xi/|\xi|)\widehat f]$.
It has the property $R\cdot R=-I$. 
Then, since $D$ is a constant matrix and $\psi=(-\Delta)^{-1}\rho$,
\begin{align*}
  \diver(D\na\psi) 
	&= \na\cdot(D\na(-\Delta)^{-1}\rho)
	= \na(-\Delta)^{-1/2}\cdot(D\na(-\Delta)^{-{1/2}}\rho) \\
	&= R\cdot(DR\rho)
	= \frac{1}{d}\operatorname{tr}(D) (R\cdot R)\rho 
	-R\cdot\bigg(\frac{1}{d}\operatorname{tr}(D)-D\bigg)R\rho.
\end{align*}

We claim that the last term can be written as
$$
  R\cdot\bigg(\frac{1}{d}\operatorname{tr}(D)-D\bigg)R\rho 
	= c_d\int_{\R^d}\bigg(\frac{1}{d}
	\operatorname{tr}(D)|y|^2 - y\cdot(Dy)\bigg)\frac{\rho(x-y)}{|y|^{d+2}}\dd{y},
$$
for some constant $c_d>0$ only depending on the dimension $d$. 
Set $P(\xi)=\operatorname{tr}(D)|\xi|^2/d + \xi\cdot(D\xi)$.
Since $R$ is a Fourier multiplier, we have
\begin{align*}
  R\cdot\bigg(\frac{1}{d}\operatorname{tr}(D)-D\bigg)R\rho 
	&= -\F^{-1}\bigg[\bigg(\frac{1}{d}\operatorname{tr}(D) + \frac{\xi}{|\xi|}\cdot
	D\frac{\xi}{|\xi|}\bigg)\widehat\rho\bigg]
	= -\F^{-1}\bigg[\frac{P(\xi)}{|\xi|^2}\widehat\rho\bigg] \\
	&= -\F^{-1}\bigg[c_d\F\bigg[\frac{P(x)}{|x|^{d+2}}\bigg]\F[\rho]\bigg]
	= -c_d\frac{P(\xi)}{|\xi|^{d+2}}*\rho \\
	&= -c_d\int_{\R^d}\frac{P(\xi)}{|\xi|^{d+2}}\rho(x-\xi)\dd{\xi},
\end{align*}
where we used $c_d\F[P(x)/|x|^{d+2}]=P(\xi)/|\xi|^2$
(see \cite[Theorem 5, Section III.3]{Ste70}) and the Fourier convolution formula.
This shows that \eqref{a.q} can be written as
\begin{align}
  \frac{1}{q}\frac{\dd{}}{\dd{t}}&\|\rho(t)\|_{L^q(\R^d)}^q
	+ \frac{2\kappa}{q}\|(-\Delta)^{\theta/4}(\rho^{q/2})\|_{L^2(\R^d)}^2 
	\leq -\frac{q-1}{q}\frac{\operatorname{tr}(D)}{d}\int_{\R^d}\rho^{q+1} \dd{x} 
	\nonumber \\
	&{}- \frac{q-1}{q}c_d\int_{\R^d}\int_{\R^d} \rho^q (x) \bigg(\frac{1}{d}
	\operatorname{tr}(D)|y|^2 - y\cdot(Dy)\bigg)\frac{\rho(x-y)}{|y|^{d+2}}\dd{y}\dd{x}.
	\label{c.ineq}
\end{align}

If $\operatorname{tr}(D)\ge 0$, the first integral on the right-hand side 
is nonpositive. The second integral is nonpositive if $y\cdot Dy
\le \operatorname{tr}(D)|y|^2/d$ for all $y\in\R^d$. 
However, Lemma \ref{lem.aux} below shows that under this condition, $D$ can be
decomposed as $D=aI+B$, where $a>0$ and $B$ is skew-symmetric. Consequently,
we are not able to treat general drift matrices $D$.
Clearly, even if the second integral on the right-hand side of \eqref{c.ineq}
is positive, it may happen that it is absorbed by the first integral
such that the right-hand side is still nonpositive, but we are not able to prove this.

\begin{lemma}\label{lem.aux}
Let $D=(D_{ij})\in\R^{d\times d}$ be a matrix such that 
$x\cdot Dx\le \operatorname{tr}(D)|x|^2/d$ for all $x\in\R^d$. 
Then there exists $a>0$ and a skew-symmetric matrix such that 
$D=a\mathrm{I}+B$.
\end{lemma}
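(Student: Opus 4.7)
The plan is to separate $D$ into its symmetric and skew-symmetric parts and then apply the spectral theorem; the skew part is absorbed into $B$, and the symmetric part is forced to be a multiple of the identity.

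Write $D = S + B$ with $S = \tfrac12(D + D^T)$ symmetric and $B = \tfrac12(D - D^T)$ skew-symmetric. Since $x\cdot Bx = 0$ for every $x\in\R^d$ and $\operatorname{tr}(B)=0$, the hypothesis reduces to
\[
  x\cdot Sx \le \frac{\operatorname{tr}(S)}{d}|x|^2 \quad \text{for all } x\in\R^d.
\]
Setting $a := \operatorname{tr}(D)/d = \operatorname{tr}(S)/d$, this is the statement that $aI - S$ is positive semidefinite.

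By the spectral theorem, $S$ has an orthonormal eigenbasis with real eigenvalues $\lambda_1 \le \lambda_2 \le \cdots \le \lambda_d$, and $\lambda_1 + \cdots + \lambda_d = \operatorname{tr}(S) = da$. Testing the hypothesis with a unit eigenvector for $\lambda_d$ yields $\lambda_d \le a$; but since the $\lambda_i$ average to $a$ and $\lambda_d$ is the largest, this forces $\lambda_1 = \cdots = \lambda_d = a$. Hence $S = aI$, and so $D = aI + B$ with $B$ skew-symmetric.

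There is no genuine obstacle here beyond invoking the spectral theorem. The only fussy point is the strict inequality $a > 0$: it does not follow from the stated hypothesis alone (for instance $D=-I$ satisfies the hypothesis with $a = -1$), so the sign must be enforced by an additional condition on $\operatorname{tr}(D)$ coming from the surrounding setting in Appendix~\ref{sec.drift} (where the integrand in \eqref{c.ineq} is being shown to be non-positive).
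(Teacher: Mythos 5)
Your proof is correct and takes essentially the same route as the paper's: decompose $D$ into symmetric and skew-symmetric parts, diagonalize the symmetric part by the spectral theorem, and use the trace condition to force all its eigenvalues to equal $\operatorname{tr}(D)/d$. Your caveat about $a>0$ is also well taken: the paper's own proof only establishes $S=aI$ with $a=\operatorname{tr}(D)/d$ and never argues positivity, which indeed must come from the sign condition on $\operatorname{tr}(D)$ in the surrounding discussion of Appendix~\ref{sec.drift}.
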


\begin{proof}
We can write $D=A+B$, where $A=(D+D^T)/2$ is symmetric and $B=(D-D^T)/2$ is
skew-symmetric. We show that $x\cdot Ax=a|x|^2$ for all $x\in\R^d$ for some $a>0$. 
Since $A$ is symmetric and real, we may assume (by the spectral theorem),
without loss of generality, that $A$ is a diagonal matrix, 
$A=\operatorname{diag}(a_1,\ldots,a_d)$. Furthermore, by homogeneity, it is
sufficient to show this result for all $x\in\R^d$ with $|x|=1$. Then 
$x\cdot Dx\le \operatorname{tr}(D)/d$ is equivalent to
$$
  \sum_{i=1}^d a_ix_i^2 \le \frac{1}{d}\sum_{j=1}^d a_j
$$
or, since $\sum_{i=1}^d x_i^2=1$,
$$
  \sum_{i=1}^d\bigg(a_i - \frac{1}{d}\sum_{j=1}^d a_j\bigg)x_i^2 \le 0.
$$
Choosing $x_i=\delta_{ik}$, we see that
$a_i - (1/d)\sum_{j=1}^d a_j \le 0$,
but this is only possible if $a:=a_i=a_j$ for all $i\neq j$. 
This proves the lemma.
\end{proof}

\end{appendix}

%%%%%%%%%%%%%%%%%%%%%%%%%%%%%%%%%%%%%%%%%%%%%%%%%%%%%%%%%%%%%%%%%%%%%%%%%%

\end{document}